\def\simge{\underset\sim>}
\def\T{\text}
\def\1#1{\overline{#1}}
\def\2#1{\widetilde{#1}}
\def\3#1{\widehat{#1}}
\def\4#1{\mathbb{#1}}
\def\5#1{\frak{#1}}
\def\6#1{{\mathcal{#1}}}
\def\C{{\4C}}
\def\R{{\4R}}
\def\La{\Lambda}
\def\T{\text}
\newcommand{\Om}{\Omega}
\newcommand{\om}{\omega}
\newcommand{\no}[1]{\|{#1}\|}
\def\R{{\Bbb R}}
\def\C{{\Bbb C}}
\def\di{\partial}
\def\dib{\bar\partial}
\def\Label#1{\label{#1}}
\def\simge{\underset\sim>}
\def\T{\text}
\def\1#1{\overline{#1}}
\def\2#1{\widetilde{#1}}
\def\3#1{\widehat{#1}}
\def\4#1{\mathbb{#1}}
\def\5#1{\frak{#1}}
\def\6#1{{\mathcal{#1}}}
\def\C{{\4C}}
\def\R{{\4R}}
\def\B{\6B}
\def\P{\6P}
\def\La{\Lambda}
\numberwithin{equation}{section}
\def\T{\text}
\theoremstyle{plain}
\newtheorem{theorem}{Theorem}[section]
\newtheorem{corollary}[theorem]{Corollary}
\newtheorem{lemma}[theorem]{Lemma}
\newtheorem{proposition}[theorem]{Proposition}
\theoremstyle{definition}
\newtheorem{definition}[theorem]{Definition}
\theoremstyle{remark}
\newtheorem{remark}[theorem]{Remark}
\begin{document}

\title[Boundary regularity of the solution...]{Boundary regularity of the solution to the Complex Monge-Amp\`{e}re equation on pseudoconvex domains of infinite type}
  \author[L.~K.~Ha and T.~V.~Khanh]{Ly Kim Ha and Tran Vu Khanh}
\address{Ly Kim Ha}
\address{Dipartimento di Matematica, Universit\`a Degli Studi di Padova, via 
Trieste 63, 35121 Padova, Italy}
\email{ lykimha35@yahoo.com.vn}
\address{Tran Vu Khanh}
\address{Department of Mathematics , National University of Singapore,
Blk S17, 10 Lower Kent Ridge Road, Singapore 119076, Singapore}
\email{mattvk@nus.edu.sg}
\begin{abstract} Let $\Om$ be a bounded, pseudoconvex domain of $\C^n$ satisfying the ``$f$-Property". The $f$-Property is a consequence of the geometric ``type"  of the boundary; it  holds for all pseudoconvex domains of finite type but may also occur for many relevant classes of domains of infinite type. In this paper, we prove the existence, uniqueness and  ``weak" H\"older-regularity up to the boundary of the solution to the Dirichlet problem for the complex Monge-Amp\`{e}re equation
\begin{eqnarray}
\begin{cases}
\det\left[\dfrac{\di^2(u)}{\di z_i\di\bar z_j}\right]=h\ge 0 & \T{in  }\quad\Om, \nonumber\\
u=\phi & \T{on } \quad  b\Om.
\end{cases}
\end{eqnarray}   The idea of our proof goes back to Bedford and Taylor's \cite{BT76}. However, the basic geometrical ingredient is based on a recent result by Khanh \cite{Kha13}.
\end{abstract}

\maketitle
%\tableofcontents 
\section{Introduction}
Let $\Om$ be a bounded, weakly pseudoconvex domain of $\C^n$ with $C^2$-smooth boundary $b\Om$. For given functions $h\ge 0$ defined in $\Om$ and $\phi$ defined on $b\Om$, the Dirichlet problem for the complex Monge-Amp\`{e}re equation consists in finding a continuous, plurisubharmonic function $u$ on $\Om$ such that
\begin{eqnarray}\Label{CMA}
\begin{cases}
\det[u_{ij}]=h& \T{in  }\Om,\\
u=\phi & \T{on } b\Om,
\end{cases}
\end{eqnarray}
where $u_{ij}=\dfrac{\di^2 u}{\di z_i\bar z_j }$ is the $(i,j)^{th}$-entry of  $n\times n$-matrix $[u_{ij}]$. When $u$ is not $C^2(\Om)$,  the first equation in \eqref{CMA} means that $(dd^cu)^n=hdV$ in the sense of Bedford-Taylor \cite{BT76}  (where $dV$ is the Lebesgue measure on $\C^n$). \\

When $\Om$ is a smooth, bounded, strongly pseudoconvex domain in $\C^n$, a great deal of work has been done about the existence, uniqueness and regularity of the solution to the  complex Monge-Amp\`ere problem. The most general related  results  are those  obtained in \cite{BT76} and \cite{CKNS85}.

\begin{itemize}
\item In \cite{BT76}, Bedford and Taylor establish the classical solvability of the Dirichlet problem \eqref{CMA}. Via pluripotential theory \cite{Kl91}, the right hand side is developed in the sense of positive currents when $u$ is continuous, plurisubharmonic. The authors prove that if $\Omega$ is a strongly pseudoconvex, bounded domain in $\mathbb{C}^n$ with $C^2$ boundary, and if $\phi\in Lip^{\alpha}(b\Omega)$, $0\le h^{\frac{1}{n}}\in Lip^{\frac{\alpha}{2}}(\overline{\Omega})$, where $0<\alpha \le 1$, then there is a unique solution $u\in Lip^{\frac{\alpha}{2}}(\overline{\Omega})$ of \eqref{CMA}. This result is sharp. 

\item In \cite{CKNS85}, the smoothness of the solution of \eqref{CMA} is also established. In particular, on a bounded strongly pseudoconvex domain with smooth boundary, if $\phi\in C^{\infty}(b\Omega)$, then there exists a unique solution $u\in C^{\infty}(\overline{\Omega})$ when $h$  is smooth and strictly positive on $\overline{\Om}$. The approach of \cite{CKNS85} follows the continuity method applied to the real Monge-Amp\`{e}re equations \cite{GT83}.
\end{itemize}
 When $\Om$ is not strongly pseudoconvex, there are some known results for the existence and regularity for this problem due to Blocki \cite{Blo96}, Coman \cite{Co97} and Li \cite{Li04}. 
\begin{itemize}
\item In \cite{Blo96}, Blocki also considers the Dirichlet problem \eqref{CMA} on a hyperconvex domain. He proves that when the datum $\phi\in C(b\Omega)$ can be continously extended to a plurisubharmonic function on $\Omega$ and the right hand is nonnegative, continuous, then the plurisubharmonic solution exists uniquely and continuously. However, the H\"older continuity for the solution on these domains is not verified.

\item In \cite{Co97}, Coman shows how to connect some geometrical conditions on a domain in $\mathbb{C}^2$ to the existence of a plurisubharmonic upper envelope in H\"older spaces. In particular, the weak pseudoconvexity of finite type $m$ in $\C^2$ and the fact that the Perron-Bremermann function belongs to $Lip^{\frac{\alpha}{m}}$ with corresponding data in $Lip^{\alpha}$ are equivalent. Again, this means that the finite type condition  plays a critical role in the H\"older regularity of the solution to the  complex Monge-Amp\`{e}re equation.

\item Li \cite{Li04} studies the problem on a domain admitting a non-smooth, uniformly and strictly plurisubharmonic defining function. In particular, if $\Om$ admits a uniformly and strictly plurisubharmonic defining function in $Lip^{\frac{2}{m}}(\bar\Om)$ when $0<\alpha \le \frac{2}{m}$, and $\phi\in Lip^{\alpha}(b\Omega)$ and if $ h^{\frac{1}{n}}\in Lip^{\frac{\alpha}{m}}(\bar\Omega)$, then the  solution  $u\in Lip^{\alpha}(\overline{\Omega})$ of \eqref{CMA} exists uniquely. Based on results by Catlin  \cite{Cat89} and by Fornaess-Sibony  \cite{FS89}, there exists a plurisubharmonic defining function in $Lip^{\frac{2}{m}}(\bar\Om)$ on pseudoconvex domains of finite type $m$ in $\C^2$ or convex domains of finite type $m$ in $\C^n$. 
\end{itemize}
%\begin{remark}
%{\bf In the theory of complex Monge-Amp\`ere equations, an important branch must be mentioned is stutying the Dirichlet problems on compact K\"ahler manifolds with boundary. These such problems can be refered to influential works by Phong et.al. \cite{PS06, PS10, PSS12}.} In these works, for any $0<\alpha<1$, estimates in $C^{\alpha}$ and $C^{1,\alpha}$ of the solution are studied.
%\end{remark}
The main purpose in this paper is to generalize the above results to a pseudoconvex domain, not necessarily of finite type, but admitting an $f$-Property. The $f$-Property consists in the existence of a bounded family of weights in the spirit of \cite{Cat87} and it is sufficient for an $f$-estimate for the $\dib$-Neumann problem \cite{Cat87, KZ10}. We also notice that when $\underset{t\to \infty}{\lim}\dfrac{f(t)}{\log t}=\infty$ the solution of the $\dib$-Neumann problem is regular \cite{Koh02, KZ12b}.
\begin{definition}
\Label{d1}  For a smooth, monotonic,
increasing function $f :[1+\infty)\to[1,+\infty)$ with $\dfrac{f (t)}{t^{1/2}}$ decreasing, we say that $\Om$ has an $f$-Property if there exist a neigborhood $U$ of $b\Om$ and a family of functions $\{\phi_\delta\}$ such that
\begin{enumerate}
  \item [(i)] the functions $\phi_\delta$ are plurisubharmonic,  $C^2$ on $U$, and satisfy $-1\le \phi_\delta \le0$, and
  \item[(ii)] $i\di\dib \phi_\delta\simge f(\delta^{-1})^2Id$ and $|D\phi_\delta|\lesssim  \delta^{-1}$ for any  $z\in U\cap \{z\in \Om:-\delta<r(z)<0\}$, where $r$ is a defining function of $\Om$.
  \end{enumerate}
\end{definition} 
  Here and in what follows, $\lesssim$ and $\simge$ denote inequalities up to a positive constant. Morever, we will use $\approx$ for the combination of $\lesssim$ and $\simge$. \\

\begin{remark}\Label{R1}
 For a pseudoconvex domain,  the $f$-Property is a consequence of the geometric finite type.  In \cite{Cat83, Cat87}, Catlin proves that every smooth, pseudoconvex domain $\Om$ of finite type $m$ in $\C^n$ has the $f$-Property for $f(t)=t^\epsilon$ with $\epsilon=m^{-n^2m^{n^2}}$. In particular, if $\Om$ is  strongly pseudoconvex,  or else it is pseudoconvex of finite type in $\C^2$, or else decoupled or convex in $\C^n$ then $\epsilon=\dfrac{1}{m}$ where $m$ is the  type (cf. \cite{Cat89, Kha10,  McN91b, McN92b}). 
\end{remark}
\begin{remark}\Label{R2}
  The relation of the general type (both finite and infinite type) and the $f$-Property has been studied by Khanh and Zampieri \cite{Kha10, KZ12b}. Moreover, they prove that if $P_1, ...,P_n:\C\to \R^+$ are functions such that $\Delta P_j(z_j)\simge \dfrac{F(|x_j|)}{x_j^2}$ or  $\dfrac{F(|y_j|)}{y_j^2}$ for any $j=1,..., n$, then the pseudoconvex ellipsoid
 $$C=\{(z_1,\dots, z_n)\in\C^n: \sum_{j=1}^nP_j(z_j)\le 1\}$$ 
has the $f$-Property for $f(t)=(F^*(t^{-1}))^{-1}$. Here we denote $F^*$ is the inverse function to $F$.
\end{remark}
In this paper, using the $f$-Property we prove the  ``weak" H\"older regularity for the solution of the Dirichlet problem of complex Monge-Amp\`ere equation. For this purpose we recall the definition of the $f$-H\"older spaces in \cite{Kha12}.\\
\begin{definition}
Let $f$ be an increasing function such that $\underset{t\to+\infty}{\lim} f(t)=+\infty$. For $\Om\subset \C^n$, define the $f$-H\"older space on $\overline{\Om}$ by
$$\Lambda^{f}(\overline{\Om})=\{u : \no{u}_{\infty}+\sup_{z,w\in \overline\Om}f(|z-w|^{-1}) \cdot |u(z)-u(w)|<\infty \}$$ 
and set 
$$\no{u}_{f}= \no{u}_{\infty}+\sup_{z,w\in \overline\Om}f(|z-w|^{-1})\cdot |u(z)-u(w)|. $$
\end{definition}
Note that the notion of the $f$-H\"older space includes the standard H\"older space $\Lambda_\alpha(\overline{\Om})$ by taking $f(t) = t^{\alpha}$ (so that $f(|h|^{-1}) = |h|^{-\alpha}$) with $0<\alpha<1$. The main result in this paper consists in the following:
\begin{theorem}\Label{mainresult} Let $f$ satisfy $g(t)^{-1}:=\displaystyle\int_t^\infty \dfrac{da}{a f(a)}<\infty$. Assume that $\Omega$ is a bounded, pseudoconvex domain  admitting the $f$-Property. Then, for any $0<\alpha\le 1$, if $\phi\in \Lambda^{t^{\alpha}}(b\Omega)$, and $h\ge 0$ on $\Omega$ with $h^{\frac{1}{n}}\in \Lambda^{g^{\alpha}}(\overline{\Omega})$, then the  Dirichlet problem for the complex Monge-Amp\`ere equation
\begin{eqnarray}\label{MA}
\begin{cases}
\det (u_{ij})=h & \T{in  }\quad\Om,\\
u=\phi & \T{on } \quad b\Om,
\end{cases}
\end{eqnarray}
has a unique plurisubharmonic solution $u\in \Lambda^{g^{\alpha}}(\overline{\Omega})$.
\end{theorem}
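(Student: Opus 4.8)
The plan is to realize the solution as a Perron--Bremermann envelope in the spirit of Bedford--Taylor \cite{BT76}, feeding the $f$-Property (through \cite{Kha13}) into the argument only at the single place where the geometry of $b\Om$ enters, namely the construction of barriers. Set
\[
\6B=\big\{v\in\T{PSH}(\Om)\cap C(\1\Om):\ v\le\phi\ \T{on}\ b\Om,\ (dd^cv)^n\ge h\,dV\big\},\qquad u:=\sup_{v\in\6B}v.
\]
Since $h^{1/n}\in\Lambda^{g^\alpha}(\1\Om)$ is in particular bounded, the function $z\mapsto A(|z|^2-R^2)+\min_{b\Om}\phi$ (with $\Om\Subset B_R$ and $A^n$ larger than $\|h\|_\infty$ up to a dimensional constant) belongs to $\6B$, so $\6B\neq\emptyset$ and $u$ is a bounded plurisubharmonic function. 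Uniqueness costs nothing once the rest is done: if $u_1,u_2\in\T{PSH}(\Om)\cap C(\1\Om)$ both solve \eqref{MA}, then $(dd^cu_1)^n=(dd^cu_2)^n$ and $u_1=u_2$ on $b\Om$, so the Bedford--Taylor comparison principle, applied in both directions, forces $u_1=u_2$.

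\textbf{The boundary barrier.} Here the $f$-Property is used. The Dini-type hypothesis $g(t)^{-1}=\int_t^\infty\frac{da}{af(a)}<\infty$ together with the $f$-Property allows one, following the construction in \cite{Kha13}, to produce a neighbourhood $U$ of $b\Om$ and, for every $z_0\in b\Om$, a plurisubharmonic $\beta_{z_0}$ on $U\cap\Om$ with $\beta_{z_0}(z_0)=0$, $\beta_{z_0}<0$ on $(U\cap\1\Om)\setminus\{z_0\}$, with $i\di\dib\beta_{z_0}$ bounded below by a fixed positive multiple of the identity, and with the sharp two-sided decay $-\beta_{z_0}(z)\approx g(|z-z_0|^{-1})^{-1}$. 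Two elementary remarks then suffice. First, $-(-\beta_{z_0})^{\alpha}$ is again plurisubharmonic (a concave increasing function of the plurisuperharmonic function $-\beta_{z_0}$) and $dd^c\big(-(-\beta_{z_0})^{\alpha}\big)\ge\alpha(-\beta_{z_0})^{\alpha-1}dd^c\beta_{z_0}$ dominates any prescribed multiple of $dV$ on $U$. Second, since $f(t)/t^{1/2}$ is decreasing we have $f(t)\le f(1)t^{1/2}$, hence $g(s^{-1})^{-1}\gtrsim s^{1/2}\gtrsim s$, so $|z-z_0|\lesssim-\beta_{z_0}(z)$ near $z_0$ and therefore, by $\phi\in\Lambda^{t^{\alpha}}(b\Om)$,
\[
|\phi(z)-\phi(z_0)|\lesssim|z-z_0|^{\alpha}\lesssim\big(-\beta_{z_0}(z)\big)^{\alpha}\qquad(z\in b\Om\ \T{near}\ z_0).
\]
Consequently, for $A$ large, $\phi(z_0)-A(-\beta_{z_0})^{\alpha}$, patched away from $z_0$ with the global subsolution above so that it drops beneath $\min_{b\Om}\phi$, lies in $\6B$, giving $u(z)\ge\phi(z_0)-C A\,g(|z-z_0|^{-1})^{-\alpha}$. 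For the reverse bound, $u\le\widehat\phi$, the largest plurisubharmonic function with boundary values $\phi$, and $\widehat\phi\le\phi(z_0)+A(-\beta_{z_0})^{\alpha}$ by the maximum principle (the right side is plurisuperharmonic, equals $\phi(z_0)$ at $z_0$, and dominates $\phi$ on $b\Om$). Altogether $|u(z)-\phi(\pi(z))|\lesssim g(d(z)^{-1})^{-\alpha}$, with $d(z)=\T{dist}(z,b\Om)$ and $\pi(z)$ a nearest boundary point.

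\textbf{Propagation to the interior and solvability.} To turn the boundary modulus into $u\in\Lambda^{g^{\alpha}}(\1\Om)$ I would run the classical translation argument. Fix $\tau\in\C^n$ with $|\tau|=s$ small, put $\Om_\tau=\{z:z+\tau\in\Om\}$, and on $\Om\cap\Om_\tau$ consider $v_\tau(z)=u(z+\tau)+c(s)(|z|^2-R^2)-\beta(s)$. By the Bedford--Taylor superadditivity $\big((dd^c(p+q))^n\big)^{1/n}\ge\big((dd^cp)^n\big)^{1/n}+\big((dd^cq)^n\big)^{1/n}$ for the densities, combined with $|h^{1/n}(z)-h^{1/n}(z+\tau)|\lesssim g(s^{-1})^{-\alpha}$, the choice $c(s)\approx g(s^{-1})^{-\alpha}$ guarantees $(dd^cv_\tau)^n\ge h\,dV$; and since $z\in\Om\cap\Om_\tau$ approaching $b\Om\cup\partial\Om_\tau$ forces $z+\tau$ to lie within $O(s)$ of $b\Om$, the boundary estimate shows that with $\beta(s)\approx g(s^{-1})^{-\alpha}$ the function $\max\{u,v_\tau\}$ agrees with $u$ near $b\Om$, hence extends by $u$ to a member of $\6B$. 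Therefore $\max\{u,v_\tau\}\le u$, i.e.\ $u(z+\tau)-u(z)\le\beta(s)+c(s)R^2\lesssim g(s^{-1})^{-\alpha}$ for $z,z+\tau\in\Om$; together with the boundary estimate for pairs of points both within $O(s)$ of $b\Om$, this yields $u\in\Lambda^{g^{\alpha}}(\1\Om)$. Finally, that $u$ solves $(dd^cu)^n=h\,dV$ with $u=\phi$ on $b\Om$ follows as in \cite{BT76}: applying everything above to $h_\epsilon=(h^{1/n}+\epsilon)^n$ (whose $\Lambda^{g^{\alpha}}$-seminorm does not depend on $\epsilon$) produces solutions $u_\epsilon$ with a uniform $g^{\alpha}$-modulus, increasing as $\epsilon\downarrow0$ by the comparison principle; the limit $u$ keeps that modulus, satisfies $(dd^cu)^n=\lim(dd^cu_\epsilon)^n=h\,dV$ by continuity of the Monge--Amp\`ere operator along monotone sequences, and $u=\phi$ on $b\Om$.

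\textbf{Main obstacle.} The substance of the proof is concentrated in the barrier step: one must know that the $f$-Property actually yields a \emph{peak} plurisubharmonic function at each boundary point decaying precisely at the rate $g(|z-z_0|^{-1})^{-1}$ (no faster, no slower) and carrying enough Monge--Amp\`ere mass --- this is exactly the geometric input imported from \cite{Kha13} --- and then that raising it to the power $\alpha$ preserves both plurisubharmonicity and the mass lower bound, and that $g$ is regular enough (again via $g\le f\le f(1)t^{1/2}$) for the comparisons ``$g(cs^{-1})\approx g(s^{-1})$'' used in gluing the translated competitor and in handling boundary-near pairs. Once the barrier is available, the remaining items --- nonemptiness of $\6B$, the translation estimate, the passage $h\ge0\to h_\epsilon>0$, and interior continuity of $u_\epsilon$ --- are routine Bedford--Taylor machinery.
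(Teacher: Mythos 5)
Your overall strategy --- Perron--Bremermann envelope, barriers built from peak functions, and the translation argument --- is the same one the paper uses, and the translation/gluing step and the passage $h\ge0\to h_\epsilon>0$ are described correctly. The gap is in the single step you flag yourself as "the substance of the proof": the properties you attribute to the peak function from \cite{Kha13} are not what that result supplies, and in fact one of them cannot hold.

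Concretely, you assert that the $f$-Property produces at each $z_0\in b\Om$ a plurisubharmonic $\beta_{z_0}$ with (a) $i\di\dib\beta_{z_0}$ bounded below by a fixed positive multiple of the identity, and (b) the \emph{two-sided} decay $-\beta_{z_0}(z)\approx g(|z-z_0|^{-1})^{-1}$. Neither is available. Khanh's theorem (Theorem~\ref{pshpeak} in the paper) gives a peak function $\psi_\zeta$ that is merely plurisubharmonic --- no uniform positive lower bound on the Levi form --- and whose controls are a \emph{polynomial} H\"older bound $|\psi_\zeta(z)-\psi_\zeta(z')|\le c_1|z-z'|^\eta$ together with the one-sided estimate $g\big((-\psi_\zeta(z))^{-1/\eta}\big)\le c_2|z-\zeta|^{-1}$. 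Your two-sided claim is actually inconsistent with the first of these in the genuinely infinite-type regime: if $g$ grows only logarithmically (as in the ellipsoid example of Section~4), then $g(|z-z_0|^{-1})^{-1}$ tends to $0$ only logarithmically, while $|\psi_\zeta(z)|\lesssim|z-z_0|^\eta$ decays polynomially, so $-\psi_\zeta$ cannot be $\gtrsim g(|z-z_0|^{-1})^{-1}$ near the peak. Thus the lower bound $|z-z_0|\lesssim-\beta_{z_0}(z)$ you derive from it, which is what you use to push the barrier under $\phi$ on $b\Om$, does not follow. (Also, as a small aside, $-(-\beta_{z_0})^\alpha$ is plurisubharmonic because $t\mapsto-t^\alpha$ is \emph{convex decreasing} on $(0,\infty)$, not "concave increasing" as written; the conclusion is right.)

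What the paper does at this point --- and what your proposal is missing --- is the entire content of Section~2. Rather than raising the peak function directly to the power $\alpha$, one first composes it with the carefully designed function $\omega(\delta)=g(\delta^{-1/\eta})^{-2}$. Lemma~\ref{l2} shows $\omega$ is increasing, concave, and subadditive in the sense $|\omega(\delta)-\omega(\delta')|\le\omega(|\delta-\delta'|)$; these facts are exactly what convert the polynomial $\eta$-H\"older modulus of $\psi_\zeta$ into the $g^2$-modulus of $\omega(-\psi_\zeta)$, and what keep $\omega(-\psi_\zeta)$ plurisubharmonic. Adding $|z-\zeta|^2$ then supplies the strict positivity of the Levi form, and taking the envelope over $\zeta\in b\Om$ yields the strictly plurisubharmonic defining function $\rho\in\Lambda^{g^2}(\1\Om)$ of Theorem~\ref{defining}. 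Only then are the barriers formed, as $v_\zeta=\phi(\zeta)-K\big[-2\rho+|z-\zeta|^2\big]^{\alpha/2}$, and the elementary inequality $|t^{\alpha/2}-s^{\alpha/2}|\le|t-s|^{\alpha/2}$ turns $g^2$ into $g^\alpha$. Your write-up collapses this two-stage construction (peak function $\to\omega$-transform and envelope $\to\rho$ $\to$ barrier) into a single step by assuming the output already exists at the peak-function level, which is precisely the nontrivial work the paper has to do. The rest of your argument would go through once this is repaired.
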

By Remark~\ref{R1} and \ref{R2}, we immediately have the following
\begin{corollary}
\Label{c1}1) Let $\Om$ be a bounded, $C^2$-boundary, pseudoconvex domain of finite type $m$  in $\C^n$  satisfying at least one of the following conditions: $\Om$ is strongly pseudoconvex, or $\Om$ is convex, or $n=2$, or $\Om$ is decoupled. For any $0<\alpha\le 1$, if $\phi\in Lip^{\alpha}(b\Omega)$, and $h\ge 0$ on $\Omega$ with $h^{\frac{1}{n}}\in Lip^{\frac{\alpha}{m}}(\overline{\Omega})$, then \eqref{MA} has a unique plurisubharmonic solution $u\in Lip^{\frac{\alpha}{m}}(\overline{\Om})$. If $\Omega$ has finite type $m$, but does not satisfy any one of the above additional conditions, at least we have $u\in Lip^{\alpha\epsilon}(\overline{\Om})$ for  $\epsilon=m^{-n^2m^{n^2}}$.\\

2) Let $\Om$ be a complex ellipsoid defined by
$$\Om=\{z=(z_1,...,z_n)\in \C^n:\sum_{j=1}^{n}\exp\left(1-\frac{1}{|z_j|^{s_j}}\right)<1\}.$$
If $s:=\max_{j=1,\dots,n}\{s_j\}<1$, then under the assumption of $\phi$, $h$ and $u$ in Theorem~\ref{mainresult}, we have $u\in  \Lambda^{g^{\alpha}}(\overline{\Omega})$ where  $g(t)=\log^{\frac1s-1}t$.
\end{corollary}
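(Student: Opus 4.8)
The plan is to read both parts off Theorem~\ref{mainresult}: for each class of domains I will exhibit an admissible weight $f$ for which $\Om$ has the $f$-Property (via Remarks~\ref{R1} and~\ref{R2}), evaluate the majorant $g$ through $g(t)^{-1}=\int_t^\infty\frac{da}{af(a)}$, check that the hypotheses of Theorem~\ref{mainresult} on $\phi$ and $h$ hold, and finally identify the resulting $f$-H\"older space $\Lambda^{g^\alpha}(\overline{\Om})$; existence and uniqueness of $u$ are then inherited directly from Theorem~\ref{mainresult}.

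For part~1, Remark~\ref{R1} gives the $f$-Property with $f(t)=t^{1/m}$ when $\Om$ is strongly pseudoconvex, of finite type in $\C^2$, or convex or decoupled of finite type $m$ in $\C^n$ (here $m\ge 2$, so $f(t)t^{-1/2}$ is non-increasing and $f$ is admissible in the sense of Definition~\ref{d1}), and with $f(t)=t^\epsilon$, $\epsilon=m^{-n^2m^{n^2}}$, for a general pseudoconvex domain of finite type $m$. In the first case $g(t)^{-1}=\int_t^\infty a^{-1-1/m}\,da=m\,t^{-1/m}$, so $g(t)=m^{-1}t^{1/m}\approx t^{1/m}$; since constant factors in the weight do not affect the space, $\Lambda^{g^\alpha}(\overline{\Om})=Lip^{\alpha/m}(\overline{\Om})$ and $\Lambda^{t^\alpha}(b\Om)=Lip^\alpha(b\Om)$, so the hypotheses $\phi\in Lip^\alpha(b\Om)$, $h^{1/n}\in Lip^{\alpha/m}(\overline{\Om})$ are exactly those of Theorem~\ref{mainresult}, which yields the unique solution $u\in Lip^{\alpha/m}(\overline{\Om})$. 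In the general case $g(t)\approx t^\epsilon$, hence $\Lambda^{g^\alpha}=Lip^{\alpha\epsilon}$; since $\epsilon\le 1/m$ one has $Lip^{\alpha/m}(\overline{\Om})\subseteq Lip^{\alpha\epsilon}(\overline{\Om})$, so the stated hypothesis on $h$ still implies the hypothesis of Theorem~\ref{mainresult} and we get $u\in Lip^{\alpha\epsilon}(\overline{\Om})$.

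For part~2, set $P_j(z_j)=\exp(1-|z_j|^{-s_j})$, so $\Om=\{\sum_{j=1}^n P_j(z_j)<1\}$. This is bounded (each $|z_j|<1$ on $\Om$), pseudoconvex (the complex Hessian of $\sum_j P_j$ is diagonal with entries $\tfrac14\Delta P_j\ge 0$ on $\Om$), and has smooth boundary ($\nabla(\sum_j P_j)\ne 0$ on $b\Om$, since some $z_j\ne 0$ there, and each $P_j$ is $C^\infty$, being flat at $z_j=0$). A direct computation of the radial Laplacian near $z_j=0$ gives $\Delta P_j(z_j)\approx |z_j|^{-2s_j-2}\exp(-|z_j|^{-s_j})$; since $s_j\le s:=\max_j s_j$ and $|z_j|<1$ near $\{z_j=0\}$, one has $|z_j|^{-2s_j-2}\exp(-|z_j|^{-s_j})\ge |z_j|^{-2}\exp(-|z_j|^{-s})$, i.e. $\Delta P_j(z_j)\simge \frac{F(|z_j|)}{|z_j|^2}$ for every $j$ with the common choice $F(\rho)=\exp(-\rho^{-s})$, whose inverse is $F^*(\eta)=(\log(1/\eta))^{-1/s}$. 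By Remark~\ref{R2}, $\Om$ has the $f$-Property with $f(t)=(F^*(t^{-1}))^{-1}=\log^{1/s}t$. Then $g(t)^{-1}=\int_t^\infty\frac{da}{a(\log a)^{1/s}}=\int_{\log t}^\infty u^{-1/s}\,du=\frac{1}{1/s-1}(\log t)^{1-1/s}$, which is finite precisely because $s<1$; hence $g(t)\approx \log^{1/s-1}t$, Theorem~\ref{mainresult} applies (its standing hypothesis $g(t)^{-1}<\infty$ being the assumption $s<1$), and we obtain the unique plurisubharmonic solution $u\in\Lambda^{g^\alpha}(\overline{\Om})$ with $g(t)=\log^{1/s-1}t$.

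Nothing is needed beyond Theorem~\ref{mainresult} and the quoted structure results on the $f$-Property, so the whole argument is a verification; the only mildly delicate points are checking that each $f$ meets the standing assumptions of Definition~\ref{d1} (for $f(t)=\log^{1/s}t$ one replaces $t$ by $t+e$ to make $f$ smooth and $\ge 1$ on $[1,\infty)$, which alters neither the $f$-Property nor $g$ up to equivalence), and the elementary Laplacian estimate $\Delta P_j(z_j)\simge F(|z_j|)\,|z_j|^{-2}$ that feeds into Remark~\ref{R2}. These are the main, and essentially the only, obstacles.
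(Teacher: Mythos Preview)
Your proposal is correct and follows exactly the route the paper takes: the corollary is stated as an immediate consequence of Theorem~\ref{mainresult} together with Remarks~\ref{R1} and~\ref{R2}, and you have simply written out the verifications (computing $g$ from $f(t)=t^{1/m}$, $f(t)=t^\epsilon$, and $f(t)=\log^{1/s}t$, and checking the Laplacian estimate feeding into Remark~\ref{R2}) that the paper leaves to the reader. Your added care in checking that $h^{1/n}\in Lip^{\alpha/m}\subseteq Lip^{\alpha\epsilon}$ in the general finite-type case, and that $f$ is admissible in Definition~\ref{d1}, is appropriate and not in conflict with anything in the paper.
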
 
We organize the paper as follows. In Section 2, we  construct a weak H\"older, uniformly and strictly plurisubharmonic defining function via the work of the second author about the existence of the bumping functions and the plurisubharmonic peak functions on a domain which enjoys  the $f$-Property of \cite{Kha13}. 
This particular defining function is the crucial point in the establishing the existence of the solution to the complex Monge-Amp\`ere equation. Following the work by Bedford-Taylor \cite{BT76}, we prove Theorem~\ref{mainresult} in Section~3. Finally, in Section 4, we introduce an example of infinite type domain which the index function $g$ in Theorem~\ref{mainresult} is sharp.

\section{The $f$-Property }
In this section, under the $f$-Property assumption we construct a uniformly  and strictly plurisubharmonic defining function with $g^2$-H\"older, where $g$ defined in the following theorem.
\begin{theorem}\Label{defining}Let $f$ be in  Definition \ref{d1} such that $g(t)^{-1}:=\displaystyle\int_t^\infty \dfrac{da}{a f(a)}<\infty$. Assume that $\Omega$ is a bounded, pseudoconvex domain  admitting the $f$-Property. Then there exists a strictly plurisubharmonic defining function of $\Om$ which belongs to $g^2$-H\"older space of $\overline{\Om}$, that means, there is a plurisubharmonic function $\rho$ such that 
\begin{enumerate}
  \item $z\in\Om$ if and only if $\rho(z)<0$, $b\Om=\{z\in\C^n: \rho(z)=0\}$ ;
  \item $i\di\dib\rho(X,\bar X)\ge |X|^2$ on $\Om$ in the distribution sense, for any $X\in T^{1,0}\C^n$; and
  \item $\rho$ is in $g^2$-H\"older space of $\overline{\Om}$, that is, $|\rho(z)-\rho(z')|\lesssim g(|z-z'|^{-1})^{-2}$ for any $z, z'\in \overline{\Om}$.
  \end{enumerate} 
\end{theorem}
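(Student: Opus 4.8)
The plan is to start from the known behavior of the signed boundary distance and to correct it using the $f$-Property weight family to gain strict plurisubharmonicity, then to control the modulus of continuity by carefully patching together the pieces at dyadic distances to the boundary. First I would recall that for a bounded domain with $C^2$ boundary the function $-\delta(z) = -\operatorname{dist}(z,b\Om)$ is $C^2$ in a tubular neighborhood $U$ of $b\Om$; inside $\Om$ write $r(z)$ for a fixed $C^2$ defining function with $r \approx -\delta$ near $b\Om$. The family $\{\phi_\delta\}$ from Definition~\ref{d1} satisfies $-1\le \phi_\delta\le 0$, $i\di\dib\phi_\delta\gtrsim f(\delta^{-1})^2 Id$ on the shell $\{-\delta<r<0\}$, and $|D\phi_\delta|\lesssim\delta^{-1}$. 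The key analytic identity driving the construction is that, because $\dfrac{da}{a f(a)}$ is integrable at infinity, the ``partial sums'' $\sum_k f(2^k)^{-1}$-type quantities converge at a rate governed exactly by $g$; this is where the hypothesis $g(t)^{-1}=\int_t^\infty \frac{da}{af(a)}<\infty$ enters.

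The main construction: on the dyadic shell $S_k = \{z\in U: -2^{-k}<r(z)<-2^{-k-1}\}$ the weight $\phi_{2^{-k}}$ contributes curvature of size $f(2^k)^2$. I would set, on $U\cap\Om$,
\[
\rho_0(z) = r(z) + \sum_{k\ge k_0} c_k\, 2^{-2k} f(2^k)^2\, \chi_k(z)\,\phi_{2^{-k}}(z),
\]
or more cleanly follow the standard device of summing a single telescoping series $\rho_0 = \sum_k a_k \phi_{2^{-k}}$ with coefficients $a_k\approx g(2^k)^{-2} - g(2^{k+1})^{-2}$ chosen so that (a) the series and its first derivatives converge uniformly, giving $\rho_0\in \Lambda^{g^2}$, and (b) on each shell $S_k$ the dominant term contributes Hessian $\gtrsim a_k f(2^k)^2 \gtrsim 2^{2k} a_k \cdot (f(2^k)/2^k)^2 \cdot 2^{2k}\cdots$ — the point being to arrange $a_k f(2^k)^2 \gtrsim 2^{2k}$ so that after the rescaling needed to beat the error terms one still gets a uniform lower bound on the real Hessian of $\rho_0$ (the $|D\phi|\lesssim\delta^{-1}$ bound controls the cross terms, and the $f(t)/t^{1/2}$ decreasing hypothesis guarantees $a_k f(2^k)^2$ does not grow too fast, so the sum of derivatives still converges). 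Then I would take $\rho = A\rho_0 + B|z|^2$-type combination, or rather glue $\rho_0$ near $b\Om$ to a fixed smooth strictly plurisubharmonic exhaustion on the compact interior $\Om\setminus U$ via a cutoff, and finally normalize so that $i\di\dib\rho \ge |X|^2$ and $\rho<0$ exactly on $\Om$.

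The main obstacle I expect is the simultaneous bookkeeping in (b): one must choose the coefficients $a_k$ large enough that the accumulated positive curvature from the weights dominates the (possibly large, negative) curvature of $r$ and of the cutoff functions $\chi_k$ on every shell uniformly down to the boundary, yet small enough that $\sum a_k \phi_{2^{-k}}$ still lies in $\Lambda^{g^2}$ and has summable gradient. Reconciling these is exactly what forces the exponent $g^2$ rather than $g$: each factor of $g$ comes from one ``loss,'' one for the H\"older modulus of the weight sum and effectively another absorbed in matching the shell width $2^{-k}$ against the curvature $f(2^k)^2$. I would handle the gluing of $\rho_0$ across shell boundaries either by using the max-type regularization (replacing sums of cutoffs by $\max$ of plurisubharmonic functions, which preserves plurisubharmonicity automatically and only needs a regularized-max smoothing) or by verifying directly that the cutoff errors $|\di\dib\chi_k|\lesssim 2^{2k}$ are dominated by $a_k f(2^k)^2$; the former is cleaner and is the route I would take, citing the bumping/peak-function machinery of \cite{Kha13} for the existence of the local plurisubharmonic pieces with the required quantitative bounds.
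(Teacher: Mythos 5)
Your proposal takes a genuinely different route from the paper. The paper does not shell-sum the $f$-Property weights $\phi_\delta$ at all. Instead it takes as a black box the quantitative plurisubharmonic peak functions $\psi_\zeta$ of Khanh \cite{Kha13}, stated as Theorem~\ref{pshpeak}: for each $\zeta\in b\Om$ there is a plurisubharmonic $\psi_\zeta$ peaking at $\zeta$ with $|\psi_\zeta(z)-\psi_\zeta(z')|\le c_1|z-z'|^\eta$ and $g\big((-\psi_\zeta)^{-1/\eta}\big)\le c_2|z-\zeta|^{-1}$. It then forms, for each $\zeta$,
$\rho_\zeta(z)=-\tfrac{2}{c_2^2}\,\omega\big(-\psi_\zeta(z)\big)+|z-\zeta|^2$
with $\omega(\delta)=g(\delta^{-1/\eta})^{-2}$. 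The whole analytic content is in Lemma~\ref{l2}, which shows $\omega$ is increasing, concave, and satisfies a modulus-of-continuity inequality; concavity is what makes $\omega\circ(-\psi_\zeta)$ plurisubharmonic, the modulus inequality converts the $\eta$-H\"older bound on $\psi_\zeta$ into a $g^2$-H\"older bound on $\rho_\zeta$, and the decay estimate (2) gives $\rho_\zeta\le 0$. Adding $|z-\zeta|^2$ gives strict plurisubharmonicity. The final $\rho=\sup_{\zeta\in b\Om}\rho_\zeta$ is then a strictly plurisubharmonic, $g^2$-H\"older defining function by standard envelope arguments. What this buys is that the entire geometric difficulty is isolated inside \cite{Kha13}, and the rest is a clean concave-composition-plus-envelope argument; no cutoffs, shells, or gluing at all.

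Your dyadic-shell construction is closer in spirit to what goes on \emph{inside} \cite{Kha13} than to the present proof, and as an outline it has several real gaps. First, with coefficients $a_k\approx g(2^k)^{-2}-g(2^{k+1})^{-2}\approx\big(g(2^k)f(2^k)\big)^{-1}$ and $|D\phi_{2^{-k}}|\lesssim 2^k$, the gradient contribution of term $k$ is $2^k/\big(g(2^k)f(2^k)\big)$; for $f(t)=t^\epsilon$, $\epsilon<1/2$, this is $2^{k(1-2\epsilon)}\to\infty$, so the claim that ``the series and its first derivatives converge uniformly'' is false in general, and the $\Lambda^{g^2}$ regularity cannot be read off a summable-gradient bound. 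A head/tail split can repair this, but it has to be carried out. Second, the cutoff/patching step is unnecessary and also hand-waved: since each $\phi_\delta$ is plurisubharmonic on all of $U$ (not just on its shell), the raw sum is already plurisubharmonic there and the strong Hessian bound at a point of depth $\approx 2^{-K}$ is supplied by the single term $k\approx K$; introducing cutoffs $\chi_k$ would only create negative Hessian you then have to beat. Third, extending the locally defined sum on $U\cap\Om$ to a genuine global defining function vanishing exactly on $b\Om$ and strictly plurisubharmonic throughout $\Om$ needs an explicit gluing you have not provided. Fourth, and most importantly, you never exhibit the analogue of the concave reparametrization $\omega$; in the paper this single device both preserves plurisubharmonicity under composition and transfers the $\eta$-H\"older modulus of $\psi_\zeta$ to the $g^2$-modulus of $\rho_\zeta$, and it is where the exponent $g^2$ actually enters, rather than from a heuristic ``two losses of $g$.'' If you want to avoid the peak-function route, the cleanest fix is to adopt the paper's argument: take Theorem~\ref{pshpeak} as given, prove Lemma~\ref{l2}, compose, and take the sup over $\zeta$.
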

\begin{remark}\Label{R2a}We note that if $\Om$ is strongly pseudoconvex then $f(t)\approx t^{1/2}$ and hence $g(t)\approx t^{1/2}$. In this case, it is easy to choose a defining function satisfying this theorem. So in the following we only consider that $\Om$ is not strongly pseudoconvex which implies $\dfrac{f(t)}{t^{1/2}}$ is strictly decreasing.   
\end{remark}
The proof of Theorem \ref{defining} is based on the following result about the  existence of  a family of plurisubhamonic peak functions which is recently proven by Khanh \cite{Kha13}. 
 
\begin{theorem}\Label{pshpeak} Under the assumptions of Theorem~\ref{defining}, for any $\zeta\in b\Om$, there exists a $C^2$-plurisuhharmonic function $\psi_\zeta$ on $\Om$ which is continuous on $\overline{\Om}$, and peaks at $\zeta$ (that means, $\psi_\zeta(z)< 0$ for all $z\in\overline{\Om}\setminus\{\zeta\}$ and $\psi_\zeta(\zeta)=0$). Moreover, for any constant $0<\eta<1$, there are  some positive constants $c_1,c_2$ such that the followings hold
 \begin{enumerate}
   \item $|\psi_\zeta(z)-\psi_\zeta(z')|\le c_1|z-z'|^\eta$ for any $z, z'\in\overline{\Om}$; and
      \item $g\big((-\psi_\zeta(z))^{-1/\eta}\big)\le c_2|z-\zeta|^{-1}$ for any $z\in\overline{\Om}\setminus\{\zeta\}$.
 \end{enumerate}
 \end{theorem}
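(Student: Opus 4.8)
\textbf{Proof strategy for Theorem \ref{pshpeak}.} The plan is to build $\psi_\zeta$ by integrating (more precisely, by summing over dyadic scales) the rescaled weight functions $\phi_\delta$ provided by the $f$-Property, combined with the standard ``bumping'' construction at the point $\zeta$. First, since $\Omega$ is pseudoconvex with $C^2$ boundary, there is a defining function $r$ with $i\partial\dib r \gtrsim -C i\partial\dib(|z|^2)$ near $b\Omega$; we may also enlarge slightly to get a bumped domain so that $\zeta$ lies on a supporting real hypersurface, i.e.\ there is a smooth function $\chi_\zeta$ with $\chi_\zeta(\zeta)=0$, $\chi_\zeta < 0$ on $\overline\Omega\setminus\{\zeta\}$ with $\chi_\zeta(z) \approx -|z-\zeta|^2$ on a fixed neighborhood minus a conical/tangential direction, and $i\partial\dib\chi_\zeta \gtrsim -C\, Id$; in the directions transverse to the complex tangent space this term is already strictly negative away from $\zeta$, so the only thing to fix is the complex-tangential degeneracy, and that is exactly what the $f$-Property weights are designed to control.

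Second, I would set, for $z$ with $-2^{-k} < r(z) < -2^{-k-1}$, a building block roughly of the form $\psi_\zeta(z) = A\chi_\zeta(z) + \sum_{k} 2\lambda_k \big(\phi_{2^{-k}}(z) + 1\big)$ truncated appropriately, where the weights $\lambda_k \ge 0$ are chosen decreasing in $k$ and summable-after-weighting so that the total sum is bounded (this uses $-1 \le \phi_\delta \le 0$, hence $0 \le \phi_\delta + 1 \le 1$, so the sum is controlled by $\sum \lambda_k$). The choice of $\lambda_k$ is dictated by two competing demands: (a) on the slab at depth $2^{-k}$ we need the plurisubharmonicity of $\lambda_k(\phi_{2^{-k}}+1)$, which contributes $\gtrsim \lambda_k f(2^{k})^2 \, Id$ to the complex Hessian, to beat the bad $-C\,Id$ coming from $A\chi_\zeta$; and (b) we need the final function to extend continuously to $\zeta$ with a prescribed modulus of continuity. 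Concretely I expect the right scaling to be $\lambda_k \approx f(2^k)^{-2}$ up to lower-order adjustments, which makes each slab contribute $\gtrsim Id$ to the Hessian (so after a large constant times $\chi_\zeta$ the whole thing is plurisubharmonic, even strictly so after adding $\varepsilon|z|^2$), while the partial sums $\sum_{k\le K}\lambda_k$ grow like $g(2^K)^{-2}$ by the very definition $g(t)^{-1} = \int_t^\infty \frac{da}{af(a)}$ — note $\int_t^\infty \frac{da}{af(a)} \approx \sum_{2^k \ge t} \frac{1}{f(2^k)}$ up to the discrepancy between $f^{-1}$ and $f^{-2}$ in the summand, which I will need to reconcile carefully, possibly by using $f(t)/t^{1/2}$ decreasing to pass between the two. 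The depth $-r(z)$ near $\zeta$ is comparable to $|z-\zeta|$ (transversality), so translating ``partial sum up to scale $2^{-K} \approx |z-\zeta|$'' into a bound on $-\psi_\zeta(z)$ yields $-\psi_\zeta(z) \lesssim g(|z-\zeta|^{-1})^{-2}$, which is stronger than, and implies, estimate (2) (taking $\eta$-th roots and inverting $g$); estimate (1) with any $\eta<1$ then follows because $g(t)^{-2} \lesssim_\eta t^{-\eta}$ for large $t$, a consequence of $g$ growing at least like some positive power (which in turn follows from $f(t)/t^{1/2}$ decreasing forcing $f$, hence $g$, to be at least a fixed power of $t$ — or, if $g$ grows slower than every power, one restricts the Hölder claim to the regime covered, but the convergence of the integral already forces enough growth).

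Third, I would patch the dyadic pieces together. Rather than literally summing truncated functions (which risks $C^2$-failure at the gluing hypersurfaces $\{r = -2^{-k}\}$), the cleaner route is the one used by Khanh elsewhere: take $\psi_\zeta = \sup$ or a smoothed maximum of the finitely-many-at-a-time overlapping blocks $A\chi_\zeta + \lambda_k(\phi_{2^{-k}}+1) + \lambda_{k+1}(\phi_{2^{-k-1}}+1)$, or use a Richberg-type regularized maximum; the maximum of plurisubharmonic functions is plurisubharmonic, and with a regularized max one keeps $C^2$ (at least $C^{1,1}$, then smooth by a further mollification argument away from $\zeta$, with the modulus of continuity preserved up to constants). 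One checks: $\psi_\zeta < 0$ on $\overline\Omega\setminus\{\zeta\}$ because $A\chi_\zeta < 0$ dominates away from $\zeta$ and the $\phi$-terms are $\le 0$ hence each block is $< 0$ there; $\psi_\zeta(\zeta) = 0$ and continuity at $\zeta$ from the modulus estimate above; strict plurisubharmonicity on $\Omega$ from the slab-by-slab Hessian lower bound plus an $\varepsilon|z|^2$.

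\textbf{Main obstacle.} The delicate point is the arithmetic of matching scales: the $f$-Property gives a Hessian lower bound $f(\delta^{-1})^2 Id$ (square of $f$), whereas the convergent integral defining $g$ involves $1/(af(a))$ (first power of $f$). Reconciling these — i.e.\ confirming that choosing $\lambda_k \approx f(2^k)^{-2}$ both yields a uniformly-plurisubharmonic sum \emph{and} has partial sums comparable to $g(2^K)^{-2}$ — is exactly where the hypotheses ``$f$ increasing'' and ``$f(t)/t^{1/2}$ decreasing'' must be used in a non-obvious combination, and where one must be most careful; everything else (plurisubharmonicity of maxima, transversality estimate $-r \approx |z-\zeta|$, Richberg smoothing, deducing (1) from a refined (2)) is standard. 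I would write the scale-matching lemma first, in isolation, then feed it into the construction.
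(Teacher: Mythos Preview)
The paper does not prove Theorem~\ref{pshpeak}; it is quoted as a result of Khanh \cite{Kha13} and used as a black box in the proof of Theorem~\ref{defining}. So there is no in-paper argument to compare against. Your dyadic-sum-of-weights strategy is the natural one and is presumably close in spirit to what underlies the cited reference, but the sketch as written has two genuine gaps.

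First, you have the direction of estimate (2) reversed. Since $g$ is increasing, the inequality $g\big((-\psi_\zeta)^{-1/\eta}\big)\le c_2|z-\zeta|^{-1}$ is equivalent to $-\psi_\zeta(z)\ge \big(g^{-1}(c_2|z-\zeta|^{-1})\big)^{-\eta}$, a \emph{lower} bound on $-\psi_\zeta$. This is the quantitative peak property (the function stays uniformly negative away from $\zeta$), and it is precisely what the paper uses in the proof of Theorem~\ref{defining} to force $\rho_\zeta<0$ on $\Omega$. Your construction instead produces the \emph{upper} bound $-\psi_\zeta(z)\lesssim g(|z-\zeta|^{-1})^{-2}$ via control of the partial sums; that is relevant to continuity at $\zeta$ but is the opposite inequality and cannot imply (2). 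The lower bound has to come from the $A\chi_\zeta$ term dominating the nonnegative weight-sum near $\zeta$, and the critical case for that is approach along complex tangential directions, where $-r(z)\ll |z-\zeta|$ (not $-r(z)\approx|z-\zeta|$ as you write; that comparability holds only in the normal direction).

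Second, your derivation of the H\"older estimate (1) from the $g$-modulus rests on the claim that ``$f(t)/t^{1/2}$ decreasing forces $f$, hence $g$, to be at least a fixed power of $t$.'' This is false: in the paper's own Section~4 example one has $f(t)=(1+\log t)^{1/s}$ and $g(t)=(1+\log t)^{1/s-1}$, both sub-polynomial, yet Theorem~\ref{pshpeak} still asserts $\eta$-H\"older continuity for every $\eta<1$. That regularity cannot be read off from a $g$-modulus in the infinite-type regime; it has to be obtained directly from the gradient bound $|D\phi_\delta|\lesssim\delta^{-1}$ in the $f$-Property, combined with the chosen weights $\lambda_k$, and this is a separate computation from the scale-matching lemma you identify.
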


Before giving the proof of Theorem \ref{defining}, we need the following technique lemma
\begin{lemma}\Label{l2}Let $g$ and $\eta$ be in Theorem~\ref{pshpeak}. For $\delta\in (0,1)$, let $\omega(\delta):=g(\delta^{-\frac{1}{\eta}})^{-2}$. Then we have
\begin{enumerate}
  \item[(i)] $\omega$ is increasing function on $(0,1)$ and $\lim_{\delta\to 0^+}\omega(\delta)=0$;
  \item[(ii)] for a suitable choice of $\eta>0$, $\omega$ is concave upward on $(0,1)$;
  \item[(iii)] the inequality $$|\omega(\delta)-\omega(\delta')|\le \omega(|\delta-\delta'|)$$
  holds for any $\delta,\delta'\in (0,1)$; and
\item[(iv)] for a constant $c>0$, there is $c'>0$ such that $\omega(c\delta)\le c'\omega(\delta)$ for $\delta\in(0,1)$.
\end{enumerate}
\end{lemma}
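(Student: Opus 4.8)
The plan is to derive all four assertions from one first‑order identity for $\omega$. Throughout set $s=s(\delta)=\delta^{-1/\eta}$, so that $s$ runs over $(1,\infty)$ as $\delta$ runs over $(0,1)$ and $\omega(\delta)=g(s)^{-2}$.

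First I would record the elementary identity for $g$: differentiating $g(t)^{-1}=\int_t^\infty\frac{da}{af(a)}$ gives $\frac{d}{dt}g(t)^{-1}=-\frac1{tf(t)}$, hence
\[
g'(t)=\frac{g(t)^2}{tf(t)}>0,
\]
so $g$ is increasing, and $g(t)\to\infty$ as $t\to\infty$ since the integral defining $g(t)^{-1}$ converges. Differentiating $\omega(\delta)=g(s)^{-2}$ and inserting this identity yields the clean formula
\[
\omega'(\delta)=\frac{2}{\eta\,\delta\, f(s)\, g(s)}>0,
\]
from which (i) is immediate: $\omega$ is increasing on $(0,1)$, and $\omega(\delta)\to0$ as $\delta\to0^+$, because then $s\to\infty$ and $g(s)\to\infty$. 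In particular $\omega$ is bounded on $(0,1)$, with $\omega(\delta)\le g(1)^{-2}$.

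For (ii), since $\delta=s^{-\eta}$ the last formula reads $\omega'(\delta)=\tfrac2\eta\bigl(s^{-\eta}f(s)g(s)\bigr)^{-1}$, so $\omega''\le0$ on $(0,1)$ exactly when $s\mapsto s^{-\eta}f(s)g(s)$ is non‑increasing on $(1,\infty)$. Differentiating this product and using $g'=g^2/(sf)$ gives
\[
\frac{d}{ds}\bigl(s^{-\eta}f(s)g(s)\bigr)=s^{-\eta-1}g(s)\bigl(sf'(s)+g(s)-\eta f(s)\bigr),
\]
so the concavity of $\omega$ is equivalent to the pointwise inequality $sf'(s)+g(s)\le\eta f(s)$ for all $s\ge1$. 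Two elementary bounds hold here for every $s\ge1$: the hypothesis that $f(t)/t^{1/2}$ is decreasing gives $tf'(t)\le\tfrac12 f(t)$, hence $sf'(s)\le\tfrac12 f(s)$; and from $f(a)\le f(s)(a/s)^{1/2}$ for $a\ge s$,
\[
g(s)^{-1}=\int_s^\infty\frac{da}{af(a)}\ge\frac{s^{1/2}}{f(s)}\int_s^\infty a^{-3/2}\,da=\frac{2}{f(s)},
\]
so $g(s)\le\tfrac12 f(s)$; adding, $sf'(s)+g(s)\le f(s)$. Since we are in the case where $\Omega$ is not strongly pseudoconvex, $f(t)/t^{1/2}$ is strictly decreasing (Remark~\ref{R2a}), so both bounds are strict, and one expects to be able to fix $\eta$ close to $1$ and obtain the displayed inequality. \emph{Turning this pointwise strictness into a uniform admissible $\eta\in(0,1)$ is the main obstacle of the lemma}: it is precisely here that the quantitative behaviour of $f$ (and the boundedness of $\omega$ established above) must be used, the remaining parts being comparatively routine.

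Granting (ii), part (iii) is the classical statement that an increasing concave function with $\omega(0^+)=0$ is subadditive: for $0<\delta'<\delta$ put $t=\delta-\delta'$; since $\omega'$ is non‑increasing, the map $x\mapsto\omega(x)-\omega(x-t)$ is non‑increasing on $(t,1)$, so comparing its value at $x=\delta$ with its limit as $x\to t^{+}$ (which is $\omega(t)-\omega(0^+)=\omega(t)$) yields $\omega(\delta)-\omega(\delta')\le\omega(t)$, and the left‑hand side equals $|\omega(\delta)-\omega(\delta')|$ because $\omega$ is increasing. Finally (iv) does not use (ii) and is robust: for $c>0$ set $\lambda=c^{-1/\eta}$, so $\omega(c\delta)=g(\lambda s)^{-2}$; the substitution $a\mapsto\lambda a$ in the integral for $g(\lambda s)^{-1}$, together with $f(\lambda a)\ge\min(1,\lambda)^{1/2}f(a)$ (again from the monotonicity of $f(t)/t^{1/2}$), gives $g(\lambda s)^{-1}\le\min(1,\lambda)^{-1/2}g(s)^{-1}$, that is $\omega(c\delta)\le\min(1,\lambda)^{-1}\omega(\delta)$, which is (iv) with $c'=\max(1,c^{1/\eta})$.
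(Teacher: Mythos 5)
Your proposal follows essentially the paper's route: you use the identity $\dot g(t)=g(t)^2/(t f(t))$, you compute $\omega'$ and $\omega''$, and you reduce concavity to the pointwise bound $s\dot f(s)+g(s)\le\eta f(s)$ — this is precisely the bracket the paper obtains (after correcting an apparent typo there: the paper's bracket should read $\eta-\tfrac{g}{f}-\tfrac{s\dot f}{f}$, with no extra factor of $s$ on the middle term). Your two elementary bounds $s\dot f(s)\le \tfrac12 f(s)$ and $g(s)\le\tfrac12 f(s)$ are exactly the ones the paper uses (the latter via the same integral estimate leading to $f/g\ge 2$). Your parts (i), (iii), and (iv) are correct; your (iv) (integral substitution $a\mapsto\lambda a$ with $f(\lambda a)\ge\min(1,\lambda)^{1/2}f(a)$) is in fact cleaner and more transparent than the paper's brief remark using that $g(t)/t^{1/2}$ is decreasing.

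However, the gap you flag in part (ii) is genuine, and you are right that it is the crux of the lemma. From $s\dot f(s)+g(s)\le f(s)$ with both summands strict one does \emph{not} get a uniform $\eta<1$ with $s\dot f(s)+g(s)\le\eta f(s)$ for all $s\ge1$, since both $s\dot f/f$ and $g/f$ could approach $\tfrac12$ along a sequence. Your proposal explicitly acknowledges this and stops short; the paper's own proof at this point simply asserts ``From \eqref{new} there is a constant $\eta<1$ such that the bracket term $[\dots]$ ... is non-positive'' without supplying the argument, so the paper does not actually resolve the difficulty either. Note too that your sign conventions are the correct ones for what the lemma states and for what is used later: (iii) is the subadditivity $|\omega(\delta)-\omega(\delta')|\le\omega(|\delta-\delta'|)$, which requires concavity ($\omega''\le0$), and that forces the bracket $\eta-\tfrac{g}{f}-\tfrac{s\dot f}{f}$ to be \emph{non-negative}. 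The paper's proof of (iii) (``$\dot k(t)\ge0$, $k$ increasing, $k(t)\ge k(s)=0$'') actually argues super-additivity from convexity and contradicts both the lemma statement and its use in the proof of Theorem~\ref{mainresult}; the intended argument is the standard concavity $\Rightarrow$ subadditivity one, which is what you give. In short: you have reproduced the paper's method faithfully, caught the same sticking point that the paper leaves unjustified, and presented (iii) with the logically correct orientation.
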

\begin{proof} We first give some calculations on function $g$.  By the definition of $g$, i.e., $\dfrac{1}{g(t)}:=\displaystyle\int^\infty_t\dfrac{da}{af(a)}<\infty$, we have 
\begin{eqnarray}
 \frac{\dot g(t)}{g(t)}=\frac{g(t)}{tf(t)},
\end{eqnarray}
and 
\begin{eqnarray}\Label{2.2}
\frac{\ddot{g}(t)}{\dot g(t)}=\frac{2\dot g(t)}{g(t)}-\frac{1}{t}-\frac{\dot f(t)}{f(t)}.
\end{eqnarray} Since $\dfrac{f(t)}{t^{1/2}}$ is strictly decreasing on $(1,+\infty)$ (by Remark~\ref{R2a}), we obtain $\dfrac{t\dot{f}(t)}{f(t)}<\dfrac{1}{2}$ and 
\begin{eqnarray}\begin{split}\frac{f(t)}{g(t)}=f(t)\int_t^\infty\frac{da}{af(a)}=f(t)\int_t^\infty\frac{a^{1/2} }{f(a)}\cdot\frac{da}{a^{3/2}}\nonumber\ge f(t)\frac{t^{1/2} }{f(t)} \int_t^\infty\frac{da}{a^{3/2}}=2, 
\end{split}
\end{eqnarray}
i.e., $\dfrac{g(t)}{f(t)}\le \dfrac{1}{2}$. Then 
\begin{eqnarray}\Label{new}
\frac{g(t)}{f(t)}+\frac{t\dot f(t)}{f(t)}<1.
\end{eqnarray}

Now we prove this lemma. The proof of (i) immediately follows by the the first derivative of $\om$
$$\dot{\omega}(\delta)=\frac{2}{\eta}\delta^{-\frac{1}{\eta}-1}\dot{g}(\delta^{-\frac{1}{\eta}})g^{-3}(\delta^{-\frac{1}{\eta}})\ge 0.$$
For (ii), we have 
\begin{eqnarray}\Label{ddot}\begin{split}
\ddot{\omega}(\delta)=&-\left(\frac{2}{\eta^2}\delta^{-\frac1\eta-2}\dot{g}(\delta^{-\frac{1}{\eta}})g^{-3}(\delta^{-\frac{1}{\eta}})\right)\left[\eta+1+\frac{\delta^{-\frac{1}{\eta}}\ddot{g}(\delta^{-\frac{1}{\eta}})}{\dot{g}(\delta^{-\frac{1}{\eta}})}-3\frac{\delta^{-\frac{1}{\eta}}\dot{g}(\delta^{-\frac{1}{\eta}})}{g(\delta^{-\frac{1}{\eta}})}\right]\\
=&-\left(\frac{2}{\eta^2}\delta^{-\frac1\eta-2}\dot{g}(\delta^{-\frac{1}{\eta}})g^{-3}(\delta^{-\frac{1}{\eta}})\right)\left[\eta-\frac{\delta^{-\frac{1}{\eta}}g(\delta^{-\frac{1}{\eta}})}{f(\delta^{-\frac{1}{\eta}})}-\frac{\delta^{-\frac{1}{\eta}}\dot{f}(\delta^{-\frac{1}{\eta}})}{f(\delta^{-\frac{1}{\eta}})}\right]\\
\end{split}\end{eqnarray}
where the second equality follows from \eqref{2.2}. From \eqref{new} there is a constant $\eta<1$ such that the bracket term $[\dots]$ in the last line of \eqref{ddot} is non-positive. Therefore, $\omega$ is concave upward.  \\

Now we prove that $|\omega(t)-\omega(s)|\ge \omega(|t-s|) $ for any $t,s\in (0,\delta)$. Assume $t\ge s$, for a fixed $s\in[0,\delta)$ we set $k(t):=\omega(t)-\omega(s)-\omega(t-s)$. Since $\omega$ is concave upward, $\dot{k}(t)=\dot{\omega} (t)-\dot{\omega}(t-s)\ge 0$. That means $k$ is increasing, so we obtain $k(t)\ge k(s)=0$. This completes the proof of (iii). \\
 
For the inequality (iv), we notice that if $c\le1$ then $\omega(ct)\le \omega(t)$ since $\omega$ is increasing. Otherwise, if $c>1$ we use the fact that $\dfrac{g(t) }{t^{1/2}}$ is decreasing (this is obtained from $\dfrac{t\dot g(t)}{g(t)}=\dfrac{g(t)}{f(t)}\le \dfrac{1}{2}$), we have 
$$\omega(ct)=(ct)^{1/2}\frac{\omega(ct)}{(ct)^{1/2}}\le (ct)^{1/2}\frac{\omega(t)}{(t)^{1/2}}=\frac{1}{\sqrt{c}}\omega(t).$$
This completes the proof of Lemma \ref{l2}.
\end{proof}

Now, we will prove the aim of this section.\\
{\em  Proof of Theorem 2.1. } Fix $\zeta\in b\Om$, we define 
$$\rho_\zeta(z)=-\frac{2}{c_2^2}\omega\left(-\psi_\zeta(z)\right)+|z-\zeta|^2,$$
 where $\psi_{\zeta}(.)$ and $c_2$ are in Theorem \ref{pshpeak}.
We will show that the function $\rho_\zeta(z)$ satisfies the following properties:
\begin{enumerate}
  \item $\rho_\zeta(z)<0$, for $z\in\Om$, $\rho_\zeta(\zeta)=0$;
    \item $\rho_\zeta\in C^2(\Om)$ and $i\di\dib\rho_\zeta(X,\bar X)\ge |X|^2$ on $\Om$, and $X\in T^{1,0}\C^n$ ; and
  \item $\rho_\zeta$ is in $g^2$-H\"older space in $\overline{\Om}$.
  \end{enumerate} 
{\em Proof of (1). }From $(2)$ in Theorem \ref{pshpeak} and $\omega$ increasing, we have
\begin{eqnarray}\begin{split}\omega(-\psi_{w}(z))&\ge \omega(G^\eta(|z-w|))\\
&= \left(g\left(G(|z-w|)^{-1}\right)\right)^2\\
&=\left(g(g^*((\gamma|z-w|)^{-1}))\right)^{-2}\\
&=\gamma^2|z-w|^2\ge 0.\end{split}\end{eqnarray}
Hence, as a consequence
$$\rho_{w}(z)=-\frac{2}{\gamma^2}g\bigg((-\psi_{w})^{\frac{-1}{\eta}}(z)\bigg)^{-2}+ |z-w|^2\le -|z-w|^2 <0,$$
where $w\in b\Omega$, and $z\in \Omega$. Moreover, since  $\psi_w(w)=0$ and $\omega(0)=0$, that implies $\rho_w(w)=0$ for any $w\in b\Om$.\\
 
{\em Proof of (2). } Fix $w\in b\Omega$,  the Levi form of $\omega(-\psi_w)$ on $\Om$ is following
\begin{eqnarray}\Label{levig}\begin{split}
i\di\dib \omega(-\psi_w)(X,\bar X)=&\dot{\omega}i\di\dib\psi_w(X,\bar X)-\ddot{\omega}|X\psi_w|^2\ge 0,
\end{split}
\end{eqnarray}
where the inequality follows from Lemma~\ref{l2}(i) and (ii).\\

{\em Proof of (3). } From  Lemma \ref{l2}(iii), we have
\begin{eqnarray}\Label{2.7a}\begin{split}
\left|\omega(-\psi_w(z))-\omega(-\psi_w(z'))\right|\le& \omega\left(|\psi_w(z)-\psi_w(z')|\right)\\
\le&\omega(c|z-z'|^{\eta})\\
\le& c'\omega(|z-z'|^{\eta})=c'g(|z-z'|^{-1})^{-2}.
\end{split}
\end{eqnarray} 
Here the inequalities are obtained from Theorem~ \ref{pshpeak}(1) and Lemma~\ref{l2}(iii)-(iv).\\

On the other hand, since $\Omega$ is bounded and $g(t)\lesssim t^{\frac{1}{2}}$, we can show that
\begin{eqnarray}\Label{2.8a}
||z-w|^2-|z'-w|^2|\lesssim |z-z'|\lesssim g\big(|z-z'|^{-1}\big)^{-2}.
\end{eqnarray}
The inequalities \eqref{2.7a} and \eqref{2.8a} verify that $\rho_w(z)\in \Lambda^{g^2}(\overline{\Omega})$ for uniformly in $w\in b\Omega$.\\

Finally, we define $$\rho(z)=\sup_{w\in b\Om}\rho_w(z).$$
The above properties of $\rho_w$ imply that the function $\rho$ is plurisubharmonic in $\Om$ since the well-known result by LeLong \cite{Le69}, and since $g(0)=0$ and $g : [0,\infty]\to [0,\infty]$, then $\rho$ is also $g^2$-H\"older continuous in $\overline{\Om}$ due to the theory of Modulus of continuity, the superior envelope of these such functions also belongs to the same space. Moreover, since the second property of each $\rho_w$ above, in the distribution sense we have \begin{equation}i\di\dib\rho(L,\bar L)\ge |L|^2.\end{equation}
This completes the proof of Theorem~\ref{defining}. 
$\hfill\Box$

\section{Proof of theorem \ref{mainresult}}

Let $\Omega$ be a bounded open set in $\mathbb{C}^n$, and $\P(\Omega)$ denote the space of plurisubharmonic functions on $\Omega$. The following proof of Theorem \ref{mainresult} is adapted from the argument given by Bedford and Taylor \cite[Theorem 6.2]{BT76} for weakly pseudoconvex domains. Based on the approach in \cite{BT76}, we need the following proposition.

\begin{proposition}\Label{p3.1} Let $\Om$ be a bounded, pseudoconvex domain. Assume that there is a strictly plurisubharmonic defining function $\rho$ of $\Om$ such that $\rho\in \Lambda^{g^2}(\bar\Om)$. Let $0<\alpha \le 1$, and $\phi\in \Lambda^{t^{\alpha}}(b\Omega)$, and let $h\ge 0$ with $h^{1/n}\in \Lambda^{g^{\alpha}}(\overline{\Omega})$. Then, for each $\zeta\in b\Omega$, there exists $v_{\zeta}\in \Lambda^{g^{\alpha}}(\Omega)\cap \P(\Omega)$ such that
\begin{enumerate}
\item[(i)] $v_{\zeta}(z)\le \phi(z)$ for all $z\in b\Omega$, and $v_{\zeta}(\zeta)=\phi(\zeta)$,\\
\item[(ii)] $\no{v_{\zeta}}_{\Lambda^{g^{\alpha}}(\overline{\Omega})}\le C_0$,\\
\item[(iii)] det $(H(v_{\zeta})(z))\ge h(z)$,
\end{enumerate}
where $C_0$ is a positive constant depending only on $\Omega$ and $\no{\phi}_{\Lambda^{t^{\alpha}}(b\Omega)}$.
\end{proposition}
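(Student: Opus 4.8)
The plan is to build the subsolution $v_\zeta$ by combining the strictly plurisubharmonic defining function $\rho$ from Theorem~\ref{defining} with a suitable plurisubharmonic minorant of the boundary data $\phi$, and then to correct it so that the Monge--Amp\`ere mass is large enough. First I would reduce the boundary data to something tractable: since $\phi\in\Lambda^{t^\alpha}(b\Om)$, for each fixed $\zeta\in b\Om$ one can find an affine (pluriharmonic) function $\ell_\zeta$ with $\ell_\zeta(\zeta)=\phi(\zeta)$ and $\phi(z)\ge \ell_\zeta(z)-C\,|z-\zeta|^\alpha$ on $b\Om$, with $C\approx\no{\phi}_{\Lambda^{t^\alpha}(b\Om)}$ uniformly in $\zeta$. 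The point of using $\rho$ is that $(-\rho)\gtrsim \mathrm{dist}(\cdot,b\Om)$ near $b\Om$ and, crucially, $\rho\in\Lambda^{g^2}(\bar\Om)$; raising $-\rho$ to the power $\alpha/2$ produces a function whose modulus of continuity is controlled by $g^{\alpha}$ (using that $t\mapsto t^{\alpha/2}$ is concave and the composition rule for moduli of continuity, exactly as in Lemma~\ref{l2}(iii)--(iv)). So I would set, roughly,
\[
v_\zeta(z)=\ell_\zeta(z)-A\big(-\rho(z)\big)^{\alpha/2}+B\,\rho(z),
\]
with $A$ chosen large (depending on $C$ and on the comparison $|z-\zeta|^\alpha\lesssim(-\rho(z))^{\alpha/2}$ on $b\Om$, which holds because $\rho$ vanishes on $b\Om$ and is Lipschitz-$g^2$ hence H\"older) so that $v_\zeta\le\phi$ on $b\Om$ with equality at $\zeta$, and then $B$ chosen afterwards to restore plurisubharmonicity and to pump up the determinant.

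The second step is the plurisubharmonicity and the $\Lambda^{g^\alpha}$ bound. The term $-A(-\rho)^{\alpha/2}$ is not automatically psh — its Levi form has a good (positive) part coming from $-\di\dib\rho$ weighted by $\tfrac{\alpha}{2}(-\rho)^{\alpha/2-1}$ and a bad part $-\tfrac{\alpha}{2}(\tfrac{\alpha}{2}-1)(-\rho)^{\alpha/2-2}|\di\rho\wedge X|^2$; but because $\rho$ is strictly psh, $i\di\dib\rho\ge Id$, the positive part dominates the bad part after we add $B\rho$ with $B$ large (this is the standard convexity trick, e.g. as in Bedford--Taylor and in Li \cite{Li04}). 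The H\"older estimate (ii) then follows term by term: $\ell_\zeta$ is smooth with uniformly bounded gradient, $B\rho\in\Lambda^{g^2}\subset\Lambda^{g^\alpha}$ since $\alpha\le1$ and $g(t)\lesssim t^{1/2}$, and $(-\rho)^{\alpha/2}\in\Lambda^{g^\alpha}$ by the modulus-of-continuity composition argument; the constant $C_0$ is then visibly a function of $\Om$ (through $\rho$) and $\no{\phi}_{\Lambda^{t^\alpha}(b\Om)}$ only.

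The third and most delicate step is (iii), $\det H(v_\zeta)\ge h$. Here I would invoke the arithmetic--geometric type inequality for the complex Hessian: if $i\di\dib w\ge \lambda\, Id$ in the distribution sense then $\det H(w)\ge \lambda^n$; so it suffices to arrange $i\di\dib v_\zeta\ge h^{1/n}\, Id$. The positive contribution from the $(-\rho)^{\alpha/2}$-term near $b\Om$ behaves like $(-\rho)^{\alpha/2-1}$, which blows up, so near the boundary there is no problem once $A>0$; away from the boundary $B\rho$ alone already gives $i\di\dib v_\zeta\gtrsim B\, Id$, and choosing $B$ larger than $\sup_{\bar\Om}h^{1/n}$ (finite since $h^{1/n}\in\Lambda^{g^\alpha}(\bar\Om)$) closes the estimate. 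The genuine obstacle is the interaction region, where $-\rho$ is bounded away from $0$ but the construction must still deliver $i\di\dib v_\zeta\ge h^{1/n}Id$ while simultaneously keeping $v_\zeta\le\phi$ on $b\Om$ — this is where the freedom in $A,B$ and the strict plurisubharmonicity of $\rho$ must be balanced carefully, and where one must check that the required size of $B$ does not depend on $\zeta$. I expect this bookkeeping (and the precise verification that $|z-\zeta|^\alpha\lesssim(-\rho(z))^{\alpha/2}$ uniformly on $b\Om\times b\Om$, which ultimately rests on $\rho$ being a genuine defining function with nonvanishing gradient type control) to be the main technical point; everything else is the routine machinery of \cite{BT76} transplanted into the $f$-H\"older scale via Theorem~\ref{defining}.
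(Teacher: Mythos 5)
Your construction fails at step (i), and the failure is structural rather than a matter of bookkeeping. With
\[
v_\zeta(z)=\ell_\zeta(z)-A\big(-\rho(z)\big)^{\alpha/2}+B\rho(z),
\]
both correction terms vanish identically on $b\Om$ (there $\rho=0$), so $v_\zeta|_{b\Om}=\ell_\zeta|_{b\Om}$, and there is no reason $\ell_\zeta(z)\le\phi(z)$ for $z\in b\Om$: your own inequality only gives $\phi(z)\ge\ell_\zeta(z)-C|z-\zeta|^\alpha$, which goes the wrong way. The comparison you invoke, ``$|z-\zeta|^\alpha\lesssim(-\rho(z))^{\alpha/2}$ on $b\Om$,'' is false: the right-hand side is $0$ on $b\Om$ while the left-hand side is not, and it remains false near $b\Om$ because $-\rho(z)$ is small whenever $z$ is near \emph{any} boundary point, not only near $\zeta$. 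This is the crux of the matter, and the paper's construction is designed precisely to circumvent it: one sets
\[
v_{\zeta}(z)=\phi(\zeta)-K\big[-2\rho(z)+|z-\zeta|^2\big]^{\alpha/2},
\]
putting the $|z-\zeta|^2$ term \emph{inside} the $\alpha/2$ power alongside $-2\rho$. On $b\Om$ the bracket reduces to $|z-\zeta|^2$, so $v_\zeta(z)=\phi(\zeta)-K|z-\zeta|^\alpha\le\phi(z)$ as soon as $K\ge c_\phi$ — no affine minorant and no boundary comparison between $-\rho$ and $|z-\zeta|$ is needed. The Hölder bound (ii) then follows from $|t^{\alpha/2}-s^{\alpha/2}|\le|t-s|^{\alpha/2}$ applied to the bracket, together with $\rho\in\Lambda^{g^2}$ and \eqref{2.8a}; and the Levi form of the bracket is $\le-\mathrm{Id}$ (since $i\di\dib\rho\ge\mathrm{Id}$), which makes $v_\zeta$ psh and delivers (iii) after taking $K$ large enough to dominate $\no{h^{1/n}}_{L^\infty}$.

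Two smaller inaccuracies are worth flagging. First, your Levi-form analysis of $-A(-\rho)^{\alpha/2}$ has a sign error: the term you call ``bad,'' namely $-\tfrac{\alpha}{2}(\tfrac{\alpha}{2}-1)(-\rho)^{\alpha/2-2}|\langle\di\rho,X\rangle|^2$, is in fact \emph{non-negative} for $\alpha\le 2$ (since $\tfrac{\alpha}{2}-1<0$), so $-(-\rho)^{\alpha/2}$ is already psh without the $B\rho$ corrector — your ``standard convexity trick'' is solving a non-problem. Second, the affine minorant $\ell_\zeta$ is unnecessary; the constant $\phi(\zeta)$ suffices (and for $\alpha<1$ it is the natural choice). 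Your use of the arithmetic--geometric inequality to pass from $i\di\dib v_\zeta\ge\lambda\,\mathrm{Id}$ to $\det H(v_\zeta)\ge\lambda^n$ is fine and is effectively what the paper does as well, but it cannot rescue the argument given that (i) already fails.
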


{\it Proof.} For each $\zeta\in b\Omega$, we may choose the family $\{v_\zeta\}$ by
$$v_{\zeta}(z)=\phi(\zeta)-K[-2\rho(z)+|z-\zeta|^2]^{\frac{\alpha}{2}},\quad z\in\overline\Om$$
where $\rho$ is defined by Theorem \ref{defining}, and $K$ will be chosen step by step later.\\

It is easy to see that $v_\zeta(\zeta)=\phi(\zeta)$. Moreover, with the choice $K$ such that
 $K\ge c_{\phi}$, where $c_{\phi}=\displaystyle\sup_{z\ne \zeta\in b\Omega}\dfrac{|\phi(z)-\phi(\zeta)|}{|z-\zeta|^{\alpha}}$, we have
\begin{equation}\label{1}
v_{\zeta}(z)\le -K|z-\zeta|^{\alpha}+ \phi(\zeta)\le \phi(z),\quad \T{for all }~~z\in b\Om.
\end{equation}
This proves (i).\\

For the proof of (ii), we have the following estimates
\begin{eqnarray}\begin{split}
|v_\zeta(z)-v_\zeta(z')| 
&=\bigg|[-2\rho(z)+|z-\zeta|^2]^{\frac{\alpha}{2}}-[-2\rho(z')+|z'-\zeta|^2]^{\frac{\alpha}{2}}\bigg|\\
&\le \bigg|-2\rho(z)+|z-\zeta|^2+2\rho(z')-|z'-\zeta|^2\bigg|^{\frac{\alpha}{2}}\\
&\le \bigg[2|\rho(z)-\rho(z')|+||z-\zeta|^2-|z'-\zeta|^2|\bigg]^{\frac{\alpha}{2}}\\
&\lesssim g^{-\alpha}(|z-z'|^{-1}).\\
\end{split}
\end{eqnarray}
Here, the first inequality follows by the fact that $|t^{\frac{\alpha}{2}}-s^{\frac{\alpha}{2}}|\le |t-s|^{\frac{\alpha}{2}}$ for all $t, s$ small and $\alpha\le1$; the last inequality follows by Theorem~\ref{defining} and \eqref{2.8a}. 
This implies $v_{\zeta}\in \Lambda^{g^{\alpha}}(\overline{\Omega})$ for all $\zeta\in b\Omega$. Moreover $\no{v_\zeta}_{\Lambda^{g^{\alpha}}(\overline{\Omega})}$ is independent on $\zeta$.\\

To assert (iii), we compute $(v_{\zeta})_{ij}$ on $\Om$ 
\begin{eqnarray}\begin{split}
\left(v_{\zeta}(z)\right)_{ij}=&K \frac{\alpha}{2}(-2\rho(z)+|z-\zeta|^2)^{\frac{\alpha}{2}-2}\bigg[(-2\rho(z)+|z-\zeta|^2)(2\rho(z)_{ij}-\delta_{ij})\\
&+\big(1-\frac{\alpha}{2})(-2\rho_i+\bar z_i-\bar \zeta_i)\overline{(-2\rho_j+\bar z_j-\bar \zeta_j)}\big)\bigg].
\end{split}
\end{eqnarray}
Hence 
$$i\di\dib v_{\zeta}(X,X)\ge K \frac{\alpha}{2}(-2\rho(z)+|z-\zeta|^2)^{\frac{\alpha}{2}-1}(2i\di\dib\rho(X,X)-|X|^2)\ge K \frac{\alpha}{2}(-2\rho(z)+|z-\zeta|^2)^{\frac{\alpha}{2}-1}|X|^2,\\
 $$
for any $X\in T^{1,0}\C^n$. Here the last inequality follows from Theorem~\ref{defining}(2). Thus $v_{\zeta}$ is plurisubharmonic and furthermore we obtain
\begin{eqnarray}\begin{split}
\det [(v_{\zeta})_{ij}](z) &\ge \left[K\frac{\alpha}{2}(-2\rho(z)+|z-\zeta|^2)^{\big(\frac{\alpha}{2}-1\big)}\right]^n.\end{split}
\end{eqnarray}
Now, let's choose $$K\ge \max\left\{\frac{2}{\alpha}\max_{z\in\overline{\Om}, \zeta\in b\Om}(-2\rho(z)+|z-\zeta|^2)^{1-\frac{\alpha}{2}}\no{h^{1/n}}_{L^\infty(\Om)},c_{\phi}\right\}.$$
Then
\begin{equation}
\det [(v_{\zeta})_{ij}](z) \ge \no{h^{1/n}}^n_{L^\infty(\Om)}\ge (h^{1/n}(z))^n=h(z),
\end{equation}
for all $z\in \Omega$, and $\zeta\in b\Omega$. This completes the proof of Proposition \ref{p3.1}.

$\hfill\Box$

Before to give a proof of Theorem \ref{mainresult}, we re-call the existence theorem for the problem \eqref{CMA} by Bedford and Taylor \cite[Theorem 8.3, page 42]{BT76}. 
\begin{theorem}[Bedford-Taylor \cite{BT76}]\Label{BT}
Let $\Om$ be a bounded open set in $\C^n$.  Let $\phi\in C(b \Om)$ and $0\le h\in C(\Om)$. If the Perron-Bremerman family 
 \begin{equation*}
\B (\phi, h):=\left\{ v\in \P(\Omega)\cap C(\Omega): \text{det}\, [(v)_{ij}]\ge h\,\text{and}\, \lim\sup_{z\to z_0}v(z)\le \phi(z_0),\,\text{for all $z_0\in b\Omega$} \right\},
\end{equation*}
 is non-empty, and its upper envelope 
 \begin{equation}\Label{sol}
u=\sup\{v: v\in \B(\phi,h)\}
\end{equation}
is continuous on $\bar\Om$ with $u=\phi$ on $ b \Om$, then $u$ is a solution to the Dirichlet problem \eqref{CMA}. 
\end{theorem}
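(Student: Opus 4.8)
The plan is the classical Perron--balayage argument of Bedford--Taylor. Since $u=\sup\{v:v\in\B(\phi,h)\}$ is an upper envelope of plurisubharmonic functions it agrees with its upper semicontinuous regularization, so the continuity hypothesis gives $u\in\P(\Om)\cap C(\overline\Om)$, and $u=\phi$ on $b\Om$ is part of the hypothesis; hence the only assertion left to prove is the interior equation $(dd^cu)^n=h\,dV$ in $\Om$. The argument rests on three pluripotential-theoretic facts which I take as available (they are the foundational part of \cite{BT76}): (a) $v\mapsto(dd^cv)^n$ is well defined on locally bounded plurisubharmonic functions and is weak-$*$ continuous along monotone sequences of such functions; (b) the comparison principle: if $D$ is bounded, $v,w\in\P(D)\cap C(\overline D)$, $v\le w$ on $\partial D$ and $(dd^cv)^n\ge(dd^cw)^n$ in $D$, then $v\le w$ in $D$; and (c) the gluing lemma for plurisubharmonic functions. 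I also use that $\B(\phi,h)$ is stable under finite maxima, which follows from $(dd^c\max(v,w))^n\ge\min\{(dd^cv)^n,(dd^cw)^n\}$ applied with the common minorant $h\,dV$, the boundary condition being obviously preserved by $\max$.

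First I would show $(dd^cu)^n\ge h\,dV$. By Choquet's lemma there is a countable subfamily of $\B(\phi,h)$ whose supremum has upper semicontinuous regularization equal to $u$ (note $u^{\ast}=u$ since $u$ is continuous); replacing it by its successive maxima gives an increasing sequence $v_j\in\B(\phi,h)$ with $v_j\uparrow u$ off a pluripolar set, in particular $v_j\uparrow u$ almost everywhere. By (a), $(dd^cv_j)^n\to(dd^cu)^n$ weak-$*$, and since $(dd^cv_j)^n\ge h\,dV$ for all $j$, passing to the limit yields $(dd^cu)^n\ge h\,dV$.

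Next I would prove the reverse inequality locally, by balayage onto balls. Fix a ball $B$ with $\overline B\subset\Om$. Since $u|_{\partial B}\in C(\partial B)$ and $h|_{\overline B}\in C(\overline B)$, the Dirichlet problem on the ball $B$ --- a strictly pseudoconvex domain, which is the base case --- admits a solution $w\in\P(B)\cap C(\overline B)$ with $(dd^cw)^n=h\,dV$ in $B$ and $w=u$ on $\partial B$. Applying (b) on $B$ to $u$ and $w$ --- here $u\le w$ on $\partial B$ (with equality) and $(dd^cu)^n\ge h\,dV=(dd^cw)^n$ --- gives $u\le w$ in $B$. Define $U:=w$ on $B$ and $U:=u$ on $\Om\setminus B$; by (c) (using $w=u$ on $\partial B$) $U$ is plurisubharmonic on $\Om$, continuous on $\overline\Om$, satisfies $U\ge u$, $U=\phi$ on $b\Om$, and $(dd^cU)^n\ge h\,dV$ in $\Om$ (it equals $h\,dV$ on $B$, is $\ge h\,dV$ on $\Om\setminus\overline B$, and $\partial B$ is Lebesgue-null so carries no mass of $h\,dV$). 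Hence $U\in\B(\phi,h)$, so $U\le u$ by the definition of the envelope; together with $U\ge u$ this forces $U\equiv u$, so $u=w$ on $B$ and therefore $(dd^cu)^n=h\,dV$ on $B$. As $B$ was an arbitrary ball relatively compact in $\Om$, we conclude $(dd^cu)^n=h\,dV$ in $\Om$, i.e. $u$ solves \eqref{CMA}.

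The main obstacle is entirely packed into fact (a): constructing the nonlinear operator $(dd^c\cdot)^n$ on continuous (or bounded) plurisubharmonic functions and establishing its monotone weak-$*$ continuity is the substantive pluripotential theory of \cite{BT76}, and it is also where the bounded-open-set generality of the statement is used with no extra effort. Given (a), the comparison principle (b) follows by an integration-by-parts/Stokes argument that requires mild care near $\partial D$ (handled by shrinking $D$ slightly), while the gluing lemma (c), Choquet's lemma, and the max-stability of $\B(\phi,h)$ are routine. The Dirichlet solvability on the ball invoked in the balayage step is formally a special case of the theorem being proved, but on a strictly pseudoconvex domain it is obtained directly by exhaustion, so the argument is not circular.
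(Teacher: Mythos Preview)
The paper does not prove this theorem at all; it is quoted verbatim from \cite[Theorem~8.3, p.~42]{BT76} as a black box and then applied in the proof of Theorem~\ref{mainresult}. So there is no ``paper's own proof'' to compare against.

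That said, your outline is a correct reconstruction of the Bedford--Taylor argument and would stand on its own. One small difference from the original: to obtain $(dd^cu)^n\ge h\,dV$ you invoke Choquet's lemma together with weak-$*$ continuity of the Monge--Amp\`ere operator along \emph{increasing} sequences. In \cite{BT76} this inequality is instead derived from their Proposition~2.8 (the ``$\det$ of a supremum dominates the infimum of the $\det$'s'' inequality, which the present paper also cites in its proof of Theorem~\ref{mainresult}); the monotone-increasing convergence theorem you use was only established later. Your route is perfectly valid with today's pluripotential toolbox, just slightly anachronistic relative to the 1976 source. The balayage step for the reverse inequality, including the gluing $U=\max\{u,w\}$ and the observation that $\partial B$ carries no $h\,dV$-mass, matches the classical argument.
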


{\it Proof of Theorem \ref{mainresult}.} First, we see that the set $\B(\phi, h)$ is non-empty, in particular, it contains the family of $\{v_{\zeta}\}_{\zeta\in b\Omega}$ in Proposition \ref{p3.1}. The proof of this theorem will be completed if the upper envelope defined in \eqref{sol} has the properties
\begin{enumerate}
  \item $u(\zeta)=\phi(\zeta)$ for all $\zeta\in  b\Om$;
  \item $u\in \La^{g^\alpha}(\bar\Om)$.
  \end{enumerate} 
We note that the uniqueness of solution follows from the Minimum Principle (cf. \cite[Theorem A]{BT76}).\\

Now, we define another upper envelope, for each $z\in \overline{\Omega}$,
$$v(z):=\sup_{\zeta\in b\Omega}\{v_{\zeta}(z)\}.$$
Since the first property of $\{v_{\zeta}\}$ in Proposition \ref{p3.1}, we have 
\begin{eqnarray}\begin{split}
v(\zeta)\ge v_{\zeta}(\zeta)&=\phi(\zeta)\quad\text{for all $\zeta\in b\Omega$},\\
v(z)\le \phi(z)&\quad\text{for all $z\in b\Omega$ ,}
\end{split}
\end{eqnarray}
and so $v=\phi $ on $b\Omega$.\\

 From the second property in Proposition \ref{p3.1}, we have
 $$|v_{\zeta}(z)-v_{\zeta}(z')|\le C_0 (g^{\alpha}(|z-z'|^{-1}))^{-1} \quad\T{for all   } z,z'\in \bar\Om.$$
 Notice that $C_0$ is independent on $\zeta$ so taking the supremum in $\zeta$, theory Modulus of continuity again implies that
$$|v(z)-v(z')|\le C_0 (g^{\alpha}(|z-z'|^{-1}))^{-1} \quad \T{for all   } z,z'\in \bar\Om.$$
By Proposition 2.8 in \cite{BT76}, the following inequality holds
$$\det [(v)_{ij}](z)\ge \inf_{\zeta\in b\Omega} \{\det[(v_{\zeta})_{ij}](z)\}\ge h(z),\quad\text{for all $z\in \Omega$}.$$
Thus, we conclude that $v\in \B(\phi,h)\cap \Lambda^{g^{\alpha}}(\overline{\Omega})$ and $v(\zeta)=\phi(\zeta)$ for any $\zeta\in  b\Om$.\\

By a similar construction there exists a plurisuperharmonic function $w\in \Lambda^{g^\alpha}(\bar\Om)$ such that $w(\zeta)=\phi(\zeta)$ for any $\zeta\in b\Om$. Thus, $v(z)\le u(z)\le w(z)$ for any $z\in \bar\Om$, and hence $u(\zeta)=\phi(\zeta)$ for any $\zeta\in b\Om$. We also obtain
\begin{eqnarray}\Label{new1}
|u(z)-u(\zeta)|\le \max\{\no{v}_{\La^{g^\alpha}(\bar\Om)},\no{v}_{\La^{g^\alpha}(\bar\Om)}\} (g^{\alpha}(|z-\zeta|^{-1})^{-1}\,\T{for any }\, z\in\bar\Om,\zeta\in b\Om.
\end{eqnarray}
Here, the inequality follows by $w,v\in \Lambda^{g^\alpha}(\bar\Om)$ and $v(\zeta)=u(\zeta)=w(\zeta)=\phi(\zeta)$ for any $\zeta\in \di\Om$.\\

Finally, we want to show that \eqref{new1} also holds for $\zeta\in\Om$. For any small vector $\tau\in \mathbb{C}^n$, we define
$$V(z,\tau)=\bigg\{\begin{array}{l}
u(z) \quad\quad\quad\quad\quad\quad\text{if $z+\tau\notin \Omega, z\in \overline{\Omega}$},\\

\max\{u(z), V_{\tau}(z)\},\text{if $z,z+\tau\in \Omega$},
\end{array}$$
where 
$$V_{\tau}(z)=u(z+\tau)+\left(K_1|z|^2-K_2-K_3\right)g^{-\alpha}(|\tau|^{-1})$$
and here
$$K_1\ge\max_{k\in\{1,\dots,n\}}{\binom{n}{k}}^{1/k}\no{h^{\frac{1}{n}}}_{\Lambda^{g^{\alpha}}(\overline{\Omega})},\quad K_2\ge K_1|z|^2,\quad\T{and}\quad K_3\ge \max\{\no{v}_{\Lambda^{g^{\alpha}}(\bar\Omega)},\no{w}_{\Lambda^{g^{\alpha}}(\bar\Omega)}\}.$$
We will show that $V(z,\tau)\in \B(\phi, h)$. Observe that $V(z,\tau)\in \P(\Om)$ for all $z,\tau$. Moreover, for $z\in\di\Om$, $z+\tau\in\Om$, we have 
\begin{eqnarray}\begin{split}
V_\tau(z)-u(z)=&u(z+\tau)-u(z)+\left(K_1|z|^2-K_2-K_3\right)g^{-\alpha}(|\tau|^{-1})\\
\le& \max\{\no{v}_{\La^{g^\alpha}(\bar\Om)},\no{v}_{\La^{g^\alpha}(\bar\Om)}\}g^{-\alpha}(|\tau|^{-1})+\left(K_1|z|^2-K_2-K_3\right)g^{-\alpha}(|\tau|^{-1})\\
\le& 0.
\end{split}\end{eqnarray}
Here the first inequality follows by \eqref{new1} and the second follows by the choices of $K_2$ and $K_3$. This implies that $\lim\sup_{z\to \zeta}V(z,\tau)\le \phi(\zeta)$  for all $\zeta\in b\Om$. For the proof of $\det[V(z,\tau)_{ij}]\ge h(z)$, we need the following lemma:

\begin{lemma}\Label{l3.5} Let $(\alpha_{ij})\ge 0$ and $\beta\in (0,+\infty)$. Then
$$det[\alpha_{ij}+\beta I]\ge \sum^n_{k=0}\beta^k\det(\alpha_{ij})^{(n-k)/n}.$$ 
\end{lemma}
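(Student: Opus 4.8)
The statement to prove is Lemma~\ref{l3.5}: for a positive semidefinite Hermitian matrix $(\alpha_{ij})\ge 0$ and $\beta>0$,
$$\det[\alpha_{ij}+\beta I]\ge \sum_{k=0}^n \beta^k\,\det(\alpha_{ij})^{(n-k)/n}.$$
The natural strategy is to diagonalize. Since $(\alpha_{ij})$ is Hermitian and positive semidefinite, there is a unitary $U$ with $U^*(\alpha_{ij})U=\operatorname{diag}(\lambda_1,\dots,\lambda_n)$, $\lambda_j\ge 0$. Unitary conjugation leaves both sides unchanged: $\det[\alpha_{ij}+\beta I]=\det[U^*(\alpha_{ij})U+\beta I]=\prod_{j=1}^n(\lambda_j+\beta)$, and $\det(\alpha_{ij})=\prod_j\lambda_j$. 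So the inequality reduces to the purely scalar statement
$$\prod_{j=1}^n(\lambda_j+\beta)\ \ge\ \sum_{k=0}^n\beta^k\Big(\prod_{j=1}^n\lambda_j\Big)^{(n-k)/n}.$$

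Now expand the left-hand side as an elementary-symmetric-polynomial sum, $\prod_j(\lambda_j+\beta)=\sum_{k=0}^n\beta^k\,e_{n-k}(\lambda)$, where $e_m(\lambda)$ is the $m$-th elementary symmetric polynomial in $\lambda_1,\dots,\lambda_n$. Comparing term by term in powers of $\beta$, it suffices to show for each $m=n-k$ that
$$e_m(\lambda)\ \ge\ \Big(\prod_{j=1}^n\lambda_j\Big)^{m/n}.$$
This is exactly Maclaurin's inequality (equivalently a consequence of the AM–GM inequality): $e_m(\lambda)$ is a sum of $\binom{n}{m}$ products of $m$ of the $\lambda_j$'s, and by AM–GM the arithmetic mean of these $\binom{n}{m}$ products is at least their geometric mean, which — by symmetry, each $\lambda_j$ appearing in equally many terms — equals $(\prod_j\lambda_j)^{m/n}$. (When $\det(\alpha_{ij})=0$ the right-hand term for $m<n$ is $0$ and the inequality is trivial there; the $m=n$ term is $\det(\alpha_{ij})$ on both sides.)

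I would present it in that order: (1) reduce to diagonal form by unitary conjugation; (2) expand $\prod(\lambda_j+\beta)$ in elementary symmetric functions and match powers of $\beta$; (3) invoke AM–GM to get $e_m(\lambda)\ge(\prod\lambda_j)^{m/n}$ for each $m$. None of the steps is genuinely hard; the only point requiring a moment's care is step (3), specifically the bookkeeping that the geometric mean of all $\binom{n}{m}$ degree-$m$ monomials in $\lambda$ is $(\lambda_1\cdots\lambda_n)^{m/n}$ — this follows because each variable $\lambda_j$ occurs in exactly $\binom{n-1}{m-1}$ of the monomials, so the product of all of them is $(\lambda_1\cdots\lambda_n)^{\binom{n-1}{m-1}}$, and one takes the $\binom{n}{m}$-th root, using $\binom{n-1}{m-1}/\binom{n}{m}=m/n$. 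That is the main (and essentially only) obstacle, and it is a routine combinatorial identity.
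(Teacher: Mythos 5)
Your proposal is correct, and the opening reduction is exactly the paper's: diagonalize, so that both sides depend only on the eigenvalues $\lambda_1,\dots,\lambda_n\ge 0$, and the claim becomes $\prod_j(\lambda_j+\beta)\ge\sum_{k=0}^n\beta^k\bigl(\prod_j\lambda_j\bigr)^{(n-k)/n}$. Where you diverge is the final step. You expand fully as $\prod_j(\lambda_j+\beta)=\sum_{k=0}^n\beta^k e_{n-k}(\lambda)$ and invoke Maclaurin's inequality (AM--GM over all $\binom{n}{n-k}$ degree-$(n-k)$ monomials) to get $e_{n-k}(\lambda)\ge\bigl(\prod_j\lambda_j\bigr)^{(n-k)/n}$, which requires the combinatorial bookkeeping you flag. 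The paper instead orders the eigenvalues $0\le\lambda_1\le\cdots\le\lambda_n$ and, from the expansion of the product, keeps only the single term $\beta^k\prod_{j=k+1}^n\lambda_j$ (the product of the $n-k$ largest eigenvalues), discarding the rest; it then uses the elementary observation that the geometric mean of the top $n-k$ eigenvalues dominates the geometric mean of all $n$, i.e.\ $\prod_{j=1}^n\lambda_j\le\bigl(\prod_{j=k+1}^n\lambda_j\bigr)^{n/(n-k)}$. Both are correct and short; the paper's version avoids the $\binom{n-1}{m-1}/\binom{n}{m}=m/n$ bookkeeping at the cost of ordering the eigenvalues first, while yours invokes a named classical inequality and gives the (unused but stronger) bound with the full $e_{n-k}$ rather than a single one of its terms.
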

{\it Proof of Lemma~\ref{l3.5}.} Let $0\le\lambda_1\le\cdots\le \lambda_n$ be the eigenvalues of $(\alpha_{ij})$. We have 
\begin{eqnarray}
\begin{split}
\det[\alpha_{ij}+\beta]&=\prod_{j=1}^n(\lambda_j+\beta)\\
&\ge\sum_{k=0}^n\left(\beta^k\prod_{j=k+1}^n\lambda_j\right)\\
&\ge\sum_{k=0}^n\left(\beta^k\det[\alpha_{ij}]^{(n-k)/n}\right).\\
\end{split}
\end{eqnarray}
Here the last inequality follows by $$\det[\alpha_{ij}]=\prod_{j=1}^n\lambda_j\le \left(\prod_{j=k+1}^n\lambda_j\right)^{n/(n-k)}.$$
$\hfill\Box$

Continuing the proof of Theorem \ref{mainresult}, for any $z,z+\tau\in \Om$ we have
\begin{eqnarray}\Label{3.10a}\begin{split}
\det [(V_{\tau}(z))_{ij}]&=\det[{u}_{ij}(z+\tau)+K_1g^{-\alpha}(|\tau|^{-1})I]\\
&\ge \det[{u}_{ij}(z+\tau)]+\sum_{k=1}^n K_1^k [g^{\alpha}(|\tau|^{-1})]^{-k}.\det[{u}_{ij}(z+\tau)]^{\frac{n-k}{n}}\\
&\ge h(z+\tau)+\sum_{k=1}^n K_1^k [g^{\alpha}(|\tau|^{-1})]^{-k}.(h(z+\tau))^{\frac{n-k}{n}},\\
\end{split}
\end{eqnarray}
where the first inequality is derived by Lemma~\ref{l3.5}. Since $h^{\frac1n}\in \Lambda^{g^\alpha}(\Om)$, we obtain
$$h^{\frac1n}(z)-h^{\frac1n}(z+\tau)\le g^{-\alpha}(|\tau|^{-1})\no{h^{\frac1n}}_{\Lambda^{g^\alpha}} ,\quad \T{for any}\quad z,z+\tau\in\Om,$$
and hence 
\begin{eqnarray}\Label{3.11a}
h(z)\le h(z+\tau)+\sum_{k=1}^n\binom{n}{k}h(z+\tau)^{(n-k)/n}\left(g^{-\alpha}(|\tau|^{-1})\no{h^{\frac1n}}_{\Lambda^{g^\alpha}}\right)^k.
\end{eqnarray}
Combining \eqref{3.10a}, \eqref{3.11a} with the choice of $K_1$, we get 
$$\det [(V_{\tau})_{ij}](z)\ge h(z), \quad\T{for any} \quad z,z+\tau\in \Om.$$
We conclude that $V(z,\tau)\in \B(\phi,h)$. It follows that for all $z\in\Om$, $V(z,\tau)\le u(z)$.
If $z,z+\tau\in \Om$, this yields
\begin{eqnarray}\begin{split}
u(z+\tau)-u(z)\le& V(\tau,z)-\left(K_1|z|^2-K_2-K_3\right)g^{-\alpha}(|\tau|^{-1})-u(z)\\
\le&\left(-K_1|z|^2+K_2+K_3\right)g^{-\alpha}(|\tau|^{-1})\\
\le& \left(K_2+K_3\right)g^{-\alpha}(|\tau|^{-1}).
\end{split}
\end{eqnarray}
By reversing the role of $z$ and $z+\tau$, we assert that $u\in\Lambda^{g^{\alpha}}(\overline{\Omega})$. This completes the proof.

$\hfill\Box$
\section{A sharp example}

In this section, we give an example to show that the index function $g$ is sharp. The sharpness of $g$ in the case of strongly pseudoconvex and pseudoconvex of finite type has been considered by Bedford-Taylor \cite{BT76} and Li \cite{Li04}.\\

 Here we consider the problem on the following complex ellipsoid of infinite type in $\mathbb{C}^2$
\begin{equation}\label{Ellip}
E = \left\{  (z_1,z_2)\in \mathbb{C}^2: \rho(z_1,z_2)=\exp(1-\frac{1}{|z_1|^s})+|z_2|^2< 1 \right\},
\end{equation}
where $0<s<1$. It is well-known that this ellipsoid satisfies the $f$-property with $f(t)=(1+\log(t))^{\frac{1}{s}}$ (see \cite{Kha12, KZ10} for details) and hence we define $g(t)=(1+\log(t))^{\frac{1}{s}-1}\approx \left( \displaystyle\int^\infty_t\dfrac{da}{af(a)}\right)^{-1}$. Then for any datum $\phi\in \La^{t^\alpha}(bE)$, $0<\alpha\le 1$, and $h\ge0, h^{1/2}\in \La^{g^\alpha}(\overline{E})$, the complex Monge-Amp\`ere problem \eqref{CMA} has a unique solution in $\La^{g^\alpha}(\bar E)$. We will prove that the index $g^\alpha$ can not be improved, i.e., there does not exist $\tilde g$ such that $\underset{t\to\infty}{\lim}\dfrac{\tilde{g}(t)}{g(t)}=\infty$ and $u\in \La^{\tilde g^\alpha}(\overline{E})$. Indeed, let 
$$u(z)=(1-\log(1-|z_2|^2))^{-\frac{\alpha}{s}}\quad\T{for any ~~}
z\in\overline{E}.$$
It is easy to check that  $u$ is plurisubharmonic and smooth in $E$. Then  $u$ is a solution of the following problem
$$\begin{cases}
\det[u_{ij}]=0&\qquad \T{in}\quad E,\\
u=|z_1|^\alpha&\qquad \T{in}\quad b E,
\end{cases}$$
and hence $u\in \Lambda^{g^{\alpha}}(\bar E)$ by Theorem~\ref{mainresult}. The following claim explains why the index function $g$ can not be improved.\\

{\it Claim: Assume $u\in \Lambda^{\tilde{g}^{\alpha}}(E)$, then $\lim_{t\to\infty}\frac{\widetilde{g}(t)}{g(t)}<\infty$.}\\

{\it Proof of Claim.} For small $\epsilon>0$, let $z_\epsilon=(0,1-\epsilon)$ and $w_\epsilon=(0,1-2\epsilon)$. Since $u\in \Lambda^{\tilde{g}^{\alpha}}(E)$, it follows
\begin{eqnarray}\Label{vd1}
|u(z_\epsilon)-u(w_\epsilon)|\lesssim (\tilde g^{\alpha})^{-1}(|z_\epsilon-w_{\epsilon}|^{-1})=(\tilde g(\epsilon^{-1}))^{-\alpha}.
\end{eqnarray}
By the basis inequality $|x^{-\alpha}-y^{-\alpha}|\ge |x-y|^{-\alpha}$ for any $x,y\in [0,\infty)$ and $\alpha>0$, we obtain 
\begin{eqnarray}\Label{vd2}
|u(z_\epsilon)-u(w_\epsilon)|=|(f(\epsilon^{-2}))^{-\alpha}-(f((2\epsilon)^{-2}))^{-\alpha}|\ge |f(\epsilon^{-2})-f((2\epsilon)^{-2})|^{-\alpha}.
\end{eqnarray}
On the other hand, 
\begin{eqnarray}\Label{vd3}
\begin{split}
f(\epsilon^{-2})-f((2\epsilon)^{-2})=&\int^{\epsilon^{-2}}_{(2\epsilon)^{-2}}f'(t)dt\\
=&\frac{1}{s}\int^{\epsilon^{-2}}_{(2\epsilon)^{-2}}\frac{g(t)}{t}dt\\
\le& \frac{1}{s}\frac{g(t)}{t}\Big|_{t=\epsilon^{-2}}\times \int^{\epsilon^{-2}}_{(2\epsilon)^{-2}}dt\\
\lesssim& g(\epsilon^{-2})\approx g(\epsilon^{-1}),
\end{split}
\end{eqnarray}
where the first inequality follows by $\dfrac{g(t)}{t}=\dfrac{(1+\log t)^{\frac{1}{s}-1}}{t}$ is decreasing in a neighborhood of infinity. From \eqref{vd1}, \eqref{vd2} and \eqref{vd3}, we get 
$$\tilde g(\epsilon^{-1})\lesssim g(\epsilon^{-1})\quad \T{for any} \quad \epsilon>0.$$ This proves the claim and explains why the index function $g$ can not be improved. 

\begin{remark}In \cite{Li04}, the author also  uses this example  to show that for suitable datum $h$ and $\phi$ the unique solution in this example stills be continuous, but it can not belong to any classical H\"older space (i.e., $t^\alpha$-H\"older). 
\end{remark}

\bibliographystyle{alpha}
%\bibliography{Khanh}

\end{document}